\definecolor{newblue}{RGB}{0,102,204}
\definecolor{newred}{RGB}{206,32,41}
\newtheorem{theorem}{Theorem}[section]
\newtheorem{proposition}[theorem]{Proposition}
\newtheorem{lemma}[theorem]{Lemma}
\newtheorem{corollary}[theorem]{Corollary}
\newtheorem{definition}[theorem]{Definition}
\newtheorem*{teoA}{Theorem A}
\newtheorem*{teoB}{Theorem B}
\newtheorem*{Op1}{Problem 1}
\newtheorem*{Op2}{Problem 2}
\theoremstyle{definition}
\newtheorem{remark}[theorem]{Remark}
\newtheorem{example}[theorem]{Example}
\newtheorem{assumption}[theorem]{Assumption}
\renewcommand{\to}{\longrightarrow}
\newcommand{\PP}{\mathbb{P}}
\newcommand{\mO}{\mathcal{O}}
\title{Pluri-cotangent maps of surfaces of general type}
\author{Francesco Polizzi, Xavier Roulleau}
\date{}
\begin{document}
\maketitle
%



\begin{abstract}
Let $X$ be a compact, complex surface of general type whose cotangent bundle $\Omega_X$ is strongly semi-ample. We study the pluri-cotangent maps of $X$, namely the morphisms 
$\psi_n \colon \PP(\Omega_X) \to \PP(H^0(X, \, S^n \Omega_X))$ defined by the vector space of global sections $H^0(X, \, S^n \Omega_X)$.

\end{abstract}


\Footnotetext{{}}{2010 \textit{Mathematics Subject Classification}: 14J29}

\Footnotetext{{}} {\textit{Keywords}: surfaces of general type, pluri-cotangent maps}


\setcounter{section}{-1}

\section{Introduction} \label{sec:intro}

Let $X$ be a complex surface of general type and assume that its cotangent bundle $\Omega_X$ is strongly semi-ample. This means that for some integer $n \geq 1$ the symmetric power $S^n \Omega_X$ is globally generated, namely,  the evaluation map 
\begin{equation} \label{eq:surjectivity-1-intro}
H^0(X, \, S^n \Omega_X) \otimes \mathcal{O}_X \longrightarrow S^n \Omega_X
\end{equation}
is surjective (this condition implies in particular that $X$ is minimal and $K_X$ is ample). Recalling that we have a natural identification between $H^0(X, \, S^n \Omega_X)$ and $H^0(\mathbb{P}(\Omega_X), \, \mathcal{O}_{\mathbb{P}(\Omega_X)}(n))$, from the surjectivity of \eqref{eq:surjectivity-1-intro} we infer that the induced evaluation map
\begin{equation} \label{eq:surjectivity 2-intro}
H^0(X, \, S^n \Omega_X) \otimes \mathcal{O}_{\mathbb{P}(\Omega_X)} \longrightarrow \mathcal{O}_{\mathbb{P}(\Omega_X)}(n)
\end{equation}
is also surjective, and so it defines a morphism
\begin{equation} \label{eq:cotang-intro}
\psi_n \colon \mathbb{P}(\Omega_X) \to \mathbb{P}(H^0(X, \, S^n \Omega_X)),
 \end{equation}
that we call the $n$\emph{th pluri-cotangent map} of $X$.  The case $n=1$ was studied by the second author in \cite{Rou09}: 
it turns out that, as soon as $\Omega_X$ is globally generated and $q(X) >3 $, the cotangent map $\psi_1 \colon \mathbb{P}(\Omega_X) \to \mathbb{P}(H^0(X, \, \Omega_X)) \simeq \mathbb{P}^{q(X)-1}$ is a generically finite morphism onto its image. 
 
In this note we generalize this result to the case $n \geq 2$, giving conditions for the generic finiteness of the pluri-cotangent maps $ \psi_n$. Let us explain how the paper is organized, and what are the outcomes of our investigation. 

\smallskip
In Section \ref{sec:pluri}  we put the problem into context and we collect some preliminary facts  which will be needed  in the sequel of the manuscript. 

\smallskip
The first result we show (in Section \ref{sec:image pluri-cotangent}) is

\begin{teoA}[see Theorem \ref{thm:image-n-cotangent}]
Let $n \geq 2$ be such that $S^n \Omega_X$ is globally generated. If $h^0(X, \, S^n \Omega_X) > \frac{1}{2}(n+1)(n+2)$ then $\psi_n$ is generically finite onto its image. In this case, the exceptional locus $\mathrm{exc}(\psi_n)$ is a Zariski-closed, possibly empty subset of $ \PP (H^0(X, \, S^n \Omega_X))$ of dimension at most $1$. 
\end{teoA}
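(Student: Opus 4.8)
The plan is to reduce generic finiteness to a positivity statement about the tautological bundle and then to bound the space of sections when positivity fails. Write $\xi = \mathcal{O}_{\PP(\Omega_X)}(1)$ and $\pi \colon \PP(\Omega_X) \to X$ for the projection. Since $S^n\Omega_X$ is globally generated, $\mathcal{O}_{\PP(\Omega_X)}(n)$ is globally generated and hence $\xi$ is nef. The morphism $\psi_n$ is defined by the complete system $|\mathcal{O}_{\PP(\Omega_X)}(n)|$ and satisfies $\psi_n^*\mathcal{O}(1) = \mathcal{O}_{\PP(\Omega_X)}(n)$, so, $\PP(\Omega_X)$ being irreducible of dimension $3$, the map $\psi_n$ is generically finite onto its image exactly when its image is three-dimensional, i.e. when $\xi^3 > 0$. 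By the Grothendieck relation on the $\mathbb{P}^1$-bundle $\PP(\Omega_X)$ one computes $\xi^3 = c_1(X)^2 - c_2(X)$, which is the asymptotic content of Proposition \ref{prop:c1^2-c2>0}. It therefore suffices to prove the contrapositive: if $\psi_n$ is not generically finite, so that $Y := \psi_n(\PP(\Omega_X))$ has dimension at most $2$, then $h^0(X, S^n\Omega_X) \le \frac{1}{2}(n+1)(n+2)$.

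I would split this according to $\dim Y$. Since $\psi_n$ is given by a complete linear system, $Y$ is nondegenerate in $\PP(H^0(X, S^n\Omega_X)) = \mathbb{P}^N$ with $N = h^0(X, S^n\Omega_X) - 1$. The restriction of $\mathcal{O}_{\PP(\Omega_X)}(n)$ to a fibre $F \cong \mathbb{P}^1$ of $\pi$ is $\mathcal{O}_{\mathbb{P}^1}(n)$; in particular $\psi_n$ is finite on every $\pi$-fibre, and each $\psi_n(F)$ is a rational curve of degree at most $n$. If $\dim Y \le 1$ this forces $Y$ to be a nondegenerate curve of degree at most $n$, whence $N \le n$ and $h^0 \le n+1 \le \frac{1}{2}(n+1)(n+2)$, settling this case.

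The crux is the case $\dim Y = 2$. Here the images $\psi_n(F_x)$, as $x$ ranges over $X$, form a covering family of rational curves of degree at most $n$ on the nondegenerate surface $Y \subseteq \mathbb{P}^N$. The naive bound coming from varieties of minimal degree, $N \le \deg Y + 1$, is too weak, since together with the Hodge index theorem it only yields $h^0 \le n^2 + 2$; the point is that the extremal configuration is the $n$-th Veronese surface, for which $h^0 = \frac{1}{2}(n+1)(n+2)$ exactly. I would therefore exploit the covering family directly. Passing to the Stein factorization $\psi_n = \rho \circ g$, with $g \colon \PP(\Omega_X) \to Z$ having connected fibres onto a normal surface $Z$ and $\rho$ finite, the projection formula gives $h^0(X, S^n\Omega_X) = h^0(Z, M)$ for the ample line bundle $M = \rho^*\mathcal{O}_Y(1)$, and each curve $g(F_x)$ satisfies $M \cdot g(F_x) \le n$. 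Organizing this covering family as a fibration and pushing $M$ forward, the sections of $M$ are controlled by a rank-$(n+1)$ sheaf on the base whose positivity is constrained by the degree condition $M\cdot g(F_x) \le n$ together with the Hodge index theorem. Carrying out this estimate so that it is sharp against the Veronese is the main obstacle and the technical heart of the argument.

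Finally, for the exceptional locus, once $\psi_n$ is generically finite the set $E \subseteq \PP(\Omega_X)$ of points lying on a positive-dimensional fibre is closed by upper semicontinuity of fibre dimension, and proper since the general fibre is finite, so $\dim E \le 2$. Applying the theorem on fibre dimension to $\psi_n|_E \colon E \to \mathrm{exc}(\psi_n)$, all of whose fibres have dimension at least $1$, gives $\dim \mathrm{exc}(\psi_n) \le \dim E - 1 \le 1$; and since $\mathrm{exc}(\psi_n)$ is the image of a closed set under the proper morphism $\psi_n$ it is Zariski-closed (possibly empty), as claimed.
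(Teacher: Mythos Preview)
Your overall contrapositive strategy is sound, and the case $\dim Y\le 1$ as well as the treatment of the exceptional locus match the paper. The genuine gap is exactly where you flag it: the case $\dim Y=2$. Your proposed route via the Stein factorization $g\colon\PP(\Omega_X)\to Z$ runs into a concrete obstruction. For a general point $z\in Z$, the fibre $g^{-1}(z)$ is a curve on which $\pi$ is injective (Lemma~\ref{lem:restriction-pi}), so its image in $X$ is a curve $D$, and for \emph{every} $x\in D$ the curve $g(F_x)$ passes through $z$. Thus through a general point of $Z$ there is a one-parameter family of the curves $g(F_x)$; they cannot be ``organized as a fibration'', and pushing $M$ forward along a $\PP^1$-bundle over $X$ just recovers $S^n\Omega_X$, which is circular. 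Without a replacement for this step, the sharp bound $h^0\le\frac{1}{2}(n+1)(n+2)$ is not established.

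The paper closes this gap by a different mechanism. First, using results of Mistretta--Urbinati, it proves that the Gauss map $\mathsf g_n\colon X\to\mathbb G(n,\PP(H^0(X,S^n\Omega_X)))$, sending $x$ to the $n$-plane spanned by the rational normal curve $C_x=\psi_n(F_x)$, is \emph{finite} (Proposition~\ref{prop:gauss-finite}). This rules out the possibility that a positive-dimensional family of fibres collapses to a single rational normal curve, and yields (Proposition~\ref{prop:family-rational-normal}) that when $\dim X_n=2$ the image is \emph{$2$-covered by rational normal curves of degree $n$}: through any two points of $X_n$ passes some $C_x$. The sharp bound then comes as a black box from the Bompiani--Pirio--Russo classification of surfaces $2$-covered by degree-$n$ curves (Proposition~\ref{prop:n-covered}): such a surface spans at most $\PP^{\frac{1}{2}n(n+3)}$, with equality only for the Veronese $\nu_n(\PP^2)$. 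This is precisely the ``sharp against the Veronese'' estimate you were looking for, and it is not obtained by a direct section count on $Z$ but by quoting this external structural theorem.
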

The proof of Theorem A is obtained by generalizing the geometrical arguments used in \cite{Rou09}. We also exploit some results contained in the recent paper \cite{MU19} by Mistretta and Urbinati, allowing us to prove the finiteness of the $n$th Gauss map of $X$ (Proposition \ref{prop:gauss-finite}), together with the description of surfaces $2$-covered by curves of degree $n$ (Proposition \ref{prop:n-covered}), classically obtained by Bompiani \cite{Bom21} and rediscovered with modern techniques by Pirio and Russo \cite{PR13}.

\smallskip
In Section \ref{sec:generically-finiteness} we prove the following apparently simple fact which, however,  we have not been able to find in the literature.

\begin{teoB}[see Theorem \ref{thm:generically-finite-1}]
Let $n \geq 3$  be an integer such that $S^n \Omega_X$ is globally generated and  $\chi(X, \,S^n \Omega_X)  \geq 0$. Then the pluri-cotangent map  $\psi_n \colon \mathbb{P}(\Omega_X) \to \mathbb{P}(H^0(X, \, S^n\Omega_X))$ is generically finite onto its image. 
\end{teoB}

The proof of Theorem B uses the explicit computation of $\chi(X, \, S^n \Omega_X)$, provided in  Lemma \ref{lem:chi-S^n}, in order to show that the inequality $\chi(X, \, S^n \Omega_X) \geq 0$ gives $c_1^2-c_2 >0$. This,  together with Bogomolov's cohomological vanishing (Proposition \ref{prop:semistability of Omega}), implies that $S^n \Omega_X$ is big, so that its  global generation yields the generic finiteness of the pluri-cotangent map (Proposition \ref{prop:c1^2-c2>0}).

\smallskip 
Section \ref{sec:examples} deals with some examples and counterexamples. In Subsections \ref{subsec:complete-intersections} and \ref{subsec:product-quotient} we consider a pair of constructions giving surfaces $X$ such that $\Omega_X$ is neither ample nor globally generated: 
\begin{itemize}
\item[\textbf{(1)}] $X$ is a suitable symmetric complete intersection in an abelian fourfold of the form $A \times E$, where $A$ is an abelian threefold and $E$ is an elliptic curve;
\item[\textbf{(2)}] $X$ is of the form $X=(C \times F)/G$, where $C$ is a smooth hyperelliptic curve of genus $3$, $F$ is a smooth curve of odd genus and $G =\mathbb{Z}_2$ acts with four fixed points on $C$, freely on $F$ and diagonally on the product.
\end{itemize}
In both situations, the vector bundle $S^2 \Omega_X$ turns out to be globally generated, hence $\Omega_X$ is strongly semi-ample, and moreover the pluri-cotangent map $\psi_n$ is generically finite onto its image for all even $n$ (see Propositions \ref{prop:abelian-fourfold},  \ref{prop:cotangent_maps_example}, \ref{prop: S^2 for product-quotient}, \ref{prop:generically-finite-for-all-n}). 

In Subsection \ref{subsec:lambda-6-sharp} we exhibit some counterexamples to Proposition \ref{prop:c1^2-c2>0} in the situation $c_1^2-c_2=0$: they are  smooth ample divisors in abelian threefolds. In fact, if $X$ is such a  divisor, for all $n \geq 1$ we have 
\begin{equation} \label{eq:symmetric-power-intro}
H^0(X, \, S^n \Omega_X)=S^n H^0(X, \, \Omega_X) \simeq 	\mathbb{C}^{\frac{(n+1)(n+2)}{2}},
\end{equation}
and the image $X_n$ of $\psi_n \colon \PP(\Omega_X) \to \PP(H^0(X, \, S^n \Omega_X))$ is projectively equivalent to the $n$th Veronese surface $\nu_n(\PP^2) \subset \PP^{\frac{n(n+3)}{2}}$. Thus, no pluri-cotangent map of $X$ is generically finite onto its image. Under the additional assumption that the image of the Albanese map is smooth, we also show that these counterexamples are the only ones up to finite, \'{e}tale covers (Proposition \ref{prop:all-pluricotangent-dim-2}).

By using finite cyclic covers of a surface $X$ as above, we are also able to construct surfaces of general type  all of whose Gauss maps have arbitrarily large degree, see Remark \ref{rmk:order-Gauss-maps}.

\smallskip
Finally, in Section \ref{sec:open_problems} we state a couple of open problems.

\bigskip
\indent \textbf{Acknowledgments.} This work started in May 2017, when the first author visited the Institut de Math\'ematiques de Marseille. He is grateful to the members of the  \'{e}quipe Analyse, G\'{e}om\'{e}trie et Topologie for the invitation and the hospitality, and to GNSAGA-INdAM for the financial support. He also thanks all the MathOverflow users that generously answered his questions in several threads, see
\medskip

\noindent
\href{https://mathoverflow.net/questions/397682/}{MO397682}, \href{https://mathoverflow.net/questions/412306/}{MO412306}, \href{https://mathoverflow.net/questions/412888/}{MO412888}, \href{https://mathoverflow.net/questions/413988}{MO413988}, \href{https://mathoverflow.net/questions/414382/}{MO414382},  \href{https://mathoverflow.net/questions/414452/}{MO414452},  

\noindent 
\href{https://mathoverflow.net/questions/417972/}{MO417972}, 
\href{https://mathoverflow.net/questions/418607/}{MO418607},
\href{https://mathoverflow.net/questions/430933/}{MO430933},
\href{https://mathoverflow.net/questions/431327/}{MO431327}.

\medskip 
Both authors are grateful to Erwan Rousseau for providing useful references and to Jie Liu, Antonio Rapagnetta and Igor Reider for suggestions and remarks.
\bigskip

\indent \textbf{Notation and conventions.} 
We work over the field $\mathbb{C}$ of complex numbers.
By \emph{surface} we mean a smooth, compact complex surface
$X$, and for such a surface $\Omega_X$ denotes the holomorphic  cotangent bundle, $T_X$ the holomorphic  tangent bundle, 
$\omega_X=\mO_X(K_X)$ the
canonical bundle, $p_g(X)=h^0(X, \, K_X)$ is the geometric genus,
$q(X)=h^1(X, \, K_X)$ is the
 irregularity and $\chi(\mO_X)=1-q(X)+p_g(X)$ is the
 holomorphic Euler-Poincar\'{e} characteristic. We also set $c_1:=c_1(T_X)=-c_1(\Omega_X)$ and $c_2:=c_2(T_X)=c_2(\Omega_X)$. The second Segre number of $X$ is the integer $c_1^2-c_2$.

For projective spaces and projective bundles we use the same conventions as in \cite[Chapter 6]{Laz04}. More specifically, if $V$ is a vector space, $\mathbb{P}(V)$ stands by the projective space of $1$-dimensional quotients of $V$; we denote by $\mathbb{G}(n, \, \mathbb{P}(V))$ the Grassmannian of $n$-dimensional subspaces of $\mathbb{P}(V)$, and by $\mathbb{G}( \mathbb{P}(V), \, n)$ the Grassmannian of $n$-dimensional quotients of $\mathbb{P}(V)$. Moreover, we write $\nu_n \colon \mathbb{P}(V) \to \mathbb{P}(S^nV)$ for the $n{\textrm{th}}$ Veronese embedding of $\mathbb{P}(V)$, and we call its image $\nu_n(\mathbb{P}(V))$  the $n{\mathrm{th}}$ Veronese variety of $\mathbb{P}(V)$. 

If $\mathscr{E}$ is a vector bundle on $X$ and $x \in \mathscr{E}$, we write $\mathscr{E}(x)$ for the fibre of $\mathscr{E}$ over $x$. Furthermore, we denote by $\pi \colon \mathbb{P}(\mathscr{E}) \to X$ the projective bundle of $1$-dimensional quotients of $\mathscr{E}$, so that $\pi_*\mathcal{O}_{\mathbb{P}(\mathscr{E})}(n)=S^n \mathscr{E}$ for all $n \geq 1$.


\section{Pluri-cotangent maps and Gauss maps} \label{sec:pluri}

Let $X$ be a compact, complex surface of general type. We say that its cotangent bundle $\Omega_X$ is \emph{strongly semi-ample} if the $n$th symmetric power $S^n \Omega_X$ is globally generated for some $n \geq 1$, namely, if the evaluation map 
\begin{equation} \label{eq:surjectivity-1}
H^0(X, \, S^n \Omega_X) \otimes \mathcal{O}_X \longrightarrow S^n \Omega_X
\end{equation}
is surjective. Note that this implies 
\begin{equation} \label{eq:sections-Sn}
h^0(X, \, S^n \Omega_X) \geq \operatorname{rank}\, S^n \Omega_X = n+1  
\end{equation}
and if equality holds then $S^n \Omega_X \simeq \mathcal{O}_X^{n+1}$. In particular, since we are assuming that $X$ is of general type, the strict inequality holds in \eqref{eq:sections-Sn}. Moreover, the strong semi-ampleness of $\Omega_X$ implies that $\mathcal{O}_{\mathbb{P}(\Omega_X)}(1)$ is semi-ample, see \cite[Section 3.1]{MU19}, and so $\Omega_X$ is nef.

\begin{lemma} \label{lem:ampleness of K_X}
If $\Omega_X$ is strongly semi-ample, then $X$ does not contain any smooth rational curve. In particular, $X$ is minimal and $K_X$ is ample. 
\end{lemma}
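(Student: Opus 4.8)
The plan is to exploit the nefness of $\Omega_X$, which follows from strong semi-ampleness as recalled just above. Suppose, for contradiction, that $C \subset X$ is a smooth rational curve, so that $C \simeq \PP^1$. I would restrict $\Omega_X$ to $C$ and use the conormal exact sequence
\begin{equation}
0 \to N_{C/X}^{\vee} \to \Omega_X|_C \to \Omega_C \to 0,
\end{equation}
which exhibits $\Omega_C \simeq \mO_{\PP^1}(-2)$ as a quotient bundle of $\Omega_X|_C$. Since $\Omega_X$ is nef, its restriction $\Omega_X|_C$ is nef, and any quotient bundle of a nef bundle is again nef \cite[Chapter 6]{Laz04}; hence $\mO_{\PP^1}(-2)$ would be nef, which is absurd because it has negative degree. (Equivalently, a nef vector bundle on $\PP^1$ splits as $\bigoplus_i \mO_{\PP^1}(a_i)$ with every $a_i \geq 0$, and such a bundle admits no surjection onto $\mO_{\PP^1}(-2)$.) This contradiction shows that $X$ contains no smooth rational curve.

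The two remaining assertions then follow from classical surface theory. For minimality, I observe that a $(-1)$-curve is in particular a smooth rational curve, so its absence forces $X$ to be minimal. For the ampleness of $K_X$, I would argue through the Nakai--Moishezon criterion. As $X$ is now a minimal surface of general type, $K_X$ is nef and $K_X^2 = c_1^2 > 0$, so it suffices to check that $K_X \cdot C > 0$ for every irreducible curve $C$. Nefness already gives $K_X \cdot C \geq 0$; if equality held, the Hodge index theorem (using $K_X^2 > 0$) would force $C^2 < 0$, while the adjunction formula would give $C^2 = 2 p_a(C) - 2$, whence $p_a(C) = 0$ and $C^2 = -2$. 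Thus $C$ would be a smooth rational curve, contradicting the first part. Therefore $K_X \cdot C > 0$ for all irreducible $C$, and $K_X$ is ample.

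The main obstacle --- and the only genuinely new input beyond standard surface theory --- is the first step: recognizing that the nefness of $\Omega_X$ obstructs the existence of smooth rational curves, via the appearance of the negative line bundle $\Omega_C$ as a quotient of the nef bundle $\Omega_X|_C$. Once this is in place, minimality is immediate, and ampleness reduces, through adjunction and the Hodge index theorem, precisely to the statement just proved.
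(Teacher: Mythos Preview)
Your proof is correct and follows essentially the same route as the paper: both restrict to a putative smooth rational curve $C$, observe that $\Omega_C$ is a quotient of $\Omega_X|_C$, and derive a contradiction from the positivity inherited by $\Omega_C$. The only difference is cosmetic---the paper argues directly that $S^n\Omega_C$ is globally generated (as a quotient of $S^n\Omega_X|_C$), while you invoke the equivalent nefness consequence already recorded before the lemma; your Nakai--Moishezon argument for the ampleness of $K_X$ merely spells out what the paper leaves implicit in the phrase ``In particular''.
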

\begin{proof}
If $C$ is a smooth curve contained in $X$, then $\Omega_C$ is a quotient of the restricted bundle $\Omega_X|_C$. Since passing to the $n$th symmetric product preserves epimorphisms, it follows that $S^n\Omega_C$  is a quotient of $S^n \Omega_X|_C$. This implies that $\Omega_C$ is strongly semi-ample,  hence $g(C)\geq 1$.
\end{proof}

\begin{remark} \label{lemma: K ample Omega non ssa}
It is not hard to construct examples where $K_X$ is ample and $\Omega_X$ is not strongly semi-ample. For instance, take a smooth quintic surface $X \subset \mathbb{P}^3$ containing a line $L$. Since $L$ is a smooth rational curve (with $L^2=-3$) we see that $\Omega_X$ is not strongly semi-ample. On the other hand, by using adjunction formula we can check that there are no $(-1)$-curves or $(-2)$-curves on  $X$, so $X$ is a minimal model and $K_X$ is ample. 

\end{remark}

Lemma \ref{lem:ampleness of K_X} allows us to apply to our situation the next result, based on Bogomolov's work  \cite{Bog78}, see \cite[Corollary A.1]{Kob80} and \cite[p. 1341]{RouRous13}. 
\begin{proposition} \label{prop:semistability of Omega}
Let $X$ be a surface of general type with ample canonical class. Then for all $n \geq 1$ the vector bundle $S^n \Omega_X$ is semi-stable with respect to the polarization $K_X$, and moreover
\begin{equation} \label{eq:Bogomolov-vanishing-1}
H^0(X, \, S^n T_X \otimes \omega_X^k)=0 \quad \text{for} \, \,  n-2k >0. 
\end{equation}
\end{proposition}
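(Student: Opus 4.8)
The plan is to deduce both assertions from the $K_X$-semistability of the cotangent bundle, together with an elementary slope computation. The genuinely substantial input is the semistability of $\Omega_X$ itself: since $X$ is of general type with $K_X$ ample, this is exactly the content of Bogomolov's theorem, and I would simply invoke the references \cite{Bog78, Kob80, RouRous13} already cited above (equivalently, Yau's theorem furnishes a K\"ahler--Einstein metric of negative Ricci curvature, making $\Omega_X$ Hermitian--Einstein, hence polystable, with respect to $K_X$). From the semistability of $\Omega_X$ I would then deduce the semistability of every symmetric power: in characteristic zero, symmetric powers of a $K_X$-semistable bundle are again $K_X$-semistable. This settles the first statement.

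For the vanishing I would reduce to a statement about a semistable bundle of negative slope. Because $\Omega_X$ has rank $2$, we have the identification $S^n T_X \cong (S^n \Omega_X)^\vee$, so $S^n T_X \otimes \omega_X^k$ is again $K_X$-semistable (duality and tensoring by a line bundle both preserve semistability). It then remains only to compute its slope. Writing $\alpha, \beta$ for the Chern roots of $T_X$, so that $\alpha + \beta = c_1(T_X) = -K_X$, the roots of $S^n T_X$ are $i\alpha + (n-i)\beta$ for $i=0,\dots,n$, whence $c_1(S^n T_X) = -\tfrac{n(n+1)}{2}K_X$ and $\operatorname{rank} S^n T_X = n+1$. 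Therefore
\[
\mu_{K_X}\bigl(S^n T_X \otimes \omega_X^k\bigr) = \Bigl(k - \tfrac{n}{2}\Bigr)K_X^2 .
\]
When $n-2k>0$ the factor $k-\tfrac n2$ is negative, and since $K_X$ is ample we have $K_X^2 = c_1^2 > 0$, so this slope is strictly negative.

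Finally I would apply the standard principle that a $K_X$-semistable bundle of negative slope has no nonzero global sections: a nonzero section $s$ produces an injection $\mathcal{O}_X \hookrightarrow S^n T_X \otimes \omega_X^k$ (injective since $\mathcal{O}_X$ is torsion-free of rank $1$ and the target is a vector bundle), exhibiting a rank-$1$ subsheaf of slope $0$, which contradicts semistability once the ambient slope is negative. This yields $H^0(X, \, S^n T_X \otimes \omega_X^k)=0$ for $n-2k>0$, as required.

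The main obstacle is the semistability of $\Omega_X$, which is the deep ingredient and which I handle purely by citation; everything downstream --- stability of symmetric powers and duals, the Chern-root slope computation, and the no-sections argument --- is routine. The one point to state carefully is the identification $S^n T_X \cong (S^n \Omega_X)^\vee$, valid here precisely because $\Omega_X$ has rank $2$.
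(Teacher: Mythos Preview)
Your proposal is correct. In fact, the paper does not supply a proof of this proposition at all: it merely records the statement and cites \cite{Bog78}, \cite[Corollary A.1]{Kob80}, and \cite[p.~1341]{RouRous13} for it. Your argument spells out precisely the content behind those citations --- semistability of $\Omega_X$ (Bogomolov/Yau), preservation of semistability under symmetric powers, duals, and line-bundle twists in characteristic zero, and the standard ``no sections if the slope is negative'' principle --- so you are giving strictly more than the paper does.

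One minor inaccuracy worth correcting: the identification $S^n T_X \cong (S^n\Omega_X)^\vee$ is just $S^n(E^\vee)\cong (S^n E)^\vee$, valid for any rank in characteristic zero, so the rank-$2$ hypothesis plays no role there. Where rank $2$ genuinely enters is only in the explicit Chern-root bookkeeping you wrote down; but even that is unnecessary, since $\mu(S^n E)=n\,\mu(E)$ holds for bundles of arbitrary rank, and the slope computation $\mu(S^nT_X\otimes\omega_X^k)=(k-\tfrac{n}{2})K_X^2$ follows directly from $\mu(T_X)=-\tfrac12 K_X^2$.
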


Setting $k=1$ in \eqref{eq:Bogomolov-vanishing-1} and applying Serre duality, we get Bogomolov's vanishing 
\begin{equation} \label{eq:Bogomolov-vanishing-2}
H^2(X, \, S^n\Omega_X)=0 \quad \textrm{for} \,\, n \geq 3.
\end{equation}
\begin{corollary} \label{cor:h0-and-chi}
Let $X$ be a surface of general type with ample canonical class. Then for all $n \geq 3$ we have $h^0(X, \, S^n \Omega_X) \geq \chi(X, \,S^n \Omega_X)$.
\end{corollary}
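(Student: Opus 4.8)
The plan is to read off the inequality directly from the definition of the holomorphic Euler characteristic, feeding in the Bogomolov vanishing already established in \eqref{eq:Bogomolov-vanishing-2}. By definition of $\chi$ as the alternating sum of the cohomology dimensions, for every $n$ we have
\[
\chi(X, \, S^n\Omega_X) = h^0(X, \, S^n\Omega_X) - h^1(X, \, S^n\Omega_X) + h^2(X, \, S^n\Omega_X).
\]
The first step is to invoke \eqref{eq:Bogomolov-vanishing-2}, which gives $h^2(X, \, S^n\Omega_X) = 0$ for all $n \geq 3$; note that this is exactly the range of $n$ in the statement. After this substitution the Euler characteristic collapses to the two-term expression $\chi(X, \, S^n\Omega_X) = h^0(X, \, S^n\Omega_X) - h^1(X, \, S^n\Omega_X)$.

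The second and final step is to use nonnegativity of cohomology dimensions: since $h^1(X, \, S^n\Omega_X) \geq 0$, rearranging yields $h^0(X, \, S^n\Omega_X) = \chi(X, \, S^n\Omega_X) + h^1(X, \, S^n\Omega_X) \geq \chi(X, \, S^n\Omega_X)$, which is the desired inequality.

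I do not expect any genuine obstacle here, since all the substantive work has already been carried out in Proposition \ref{prop:semistability of Omega} and the vanishing \eqref{eq:Bogomolov-vanishing-2} derived from it. The only point that requires attention is the hypothesis $n \geq 3$: this is precisely the threshold guaranteeing that the top cohomology $H^2$ vanishes, and it is what makes the argument go through. For $n = 1, 2$ the $H^2$ term need not vanish, so the same reasoning would not apply.
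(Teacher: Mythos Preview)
Your proof is correct and follows exactly the same approach as the paper, which simply states that the result is an immediate consequence of \eqref{eq:Bogomolov-vanishing-2}. You have merely spelled out what ``immediate consequence'' means here: plug the vanishing of $H^2$ into the alternating-sum definition of $\chi$ and use $h^1 \geq 0$.
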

\begin{proof}
Immediate consequence of \eqref{eq:Bogomolov-vanishing-2}.
\end{proof}


\medskip

\begin{remark} \label{rmk:trace-zero endomorphisms}
The extremal case $n=2, \, k=1$ in Proposition  \ref{prop:semistability of Omega} is characterized as follows, cf. \cite[Theorem B and Corollaries B.1 and B.2]{Kob80}. If $X$ is a minimal surface of general type with ample canonical bundle, then we have
\begin{equation}
H^0(X, \, S^2 T_X \otimes \omega_X)=H^0(X, \, S^2 \Omega_X \otimes \omega_X^{-1})=0
\end{equation}
if and only if $\Omega_X$ is an indecomposable rank $2$ vector bundle. One direction is clear: if $\Omega_X=L_1 \oplus L_2$ is the direct sum of two line bundles, then a straightforward computation shows that $S^2 \Omega_X \otimes \omega_X^{-1}$ has a direct summand isomorphic to $\mathcal{O}_X$, hence $H^0(X, \, S^2 \Omega_X \otimes \omega_X^{-1}) \neq 0$. Conversely, let us assume that $S^2 \Omega_X \otimes \omega_X^{-1}$ has a non-zero global section and let us show that $\Omega_X$ is decomposable; to this pourpose, we will use the following argument suggested to us by Igor Reider. Identifying $S^2 \Omega_X \otimes \omega_X^{-1}$ with the sheaf $End_0(\Omega_X)$ of trace-zero endomorphisms of $\Omega_X$, \footnote{This is a consequence of the following linear algebra facts. Consider a rank $2$ vector space $V$ over a field $\mathbb{K}$ of characteristic different from $2$.  Since every square matrix can be written in a unique way as the sum of a symmetric matrix and a skew-symmetric one, we have the direct sum decomposition 
$V \otimes  V = S^2 V \oplus \wedge^2V$.
Taking the tensor product with $\wedge^2V^*$, and using the identification
$V \otimes \wedge^2 V^* = V^*$ (coming from the bilinear pairing on $V$ induced by the wedge product, namely  $v \otimes  w  \mapsto v \wedge w$), we get an identification
$V^* \otimes V = (S^2V \otimes \wedge^2V^*) \oplus \mathbb{K}$.
On the other hand, $V^* \otimes V = \operatorname{Hom}(V,\, V)$, so we get a further identification
$\operatorname{Hom}(V, \, V) = (S^2V \otimes \wedge^2V^*) \oplus \mathbb{K}$. Under this identification, an endomorphism
$f \colon V \to  V$ satisfies $\operatorname{Trace}(f)=0$ if and only if it lies in the first summand $S^2V \otimes \wedge^2V^*$. This is a straightforward computation based on the interpretation of the trace as the functional $ V^* \otimes V \to \mathbb{K}$ given by the natural evaluation on decomposable tensors, namely $\operatorname{Trace}(f \otimes v)=f(v)$. 
Therefore $S^2V \otimes \wedge^2V^*$ is naturally identified with the vector space $\operatorname{Hom}_0(V, \, V)$ of trace-zero endomorphisms of $V$. \\}  a non-zero global section corresponds to an endomorphism $f \colon \Omega_X \to \Omega_X$ whose trace is zero at every point. Now we have two cases:
\begin{itemize}
\item[$\boldsymbol{(i)}$] There is a point $x_0 \in X$ such that $f_{x_0} \colon \Omega_X(x_0) \to \Omega_X(x_0)$ has two non-zero eigenvalues $\pm \lambda$; then 
\begin{equation}
\Omega_X= \ker(f- \lambda \operatorname{I}) \oplus \ker(f + \lambda \operatorname{I})
\end{equation}
is the desired splitting. \footnote{If we have an endomorphism $g \colon E \to E$ of a vector bundle $E$, then its determinant $\det g \colon \det E \to \det E$ is a scalar multiple of the identity and so, if it vanishes at one point, it vanishes everywhere. Thus, taking $E=\Omega_X$ and $g=f-\lambda I$, $g=f + \lambda I$, we get 
\begin{equation}
\Omega_X(x)=\ker(f- \lambda I)(x) \oplus \ker(f + \lambda I)(x)
\end{equation}
for \emph{all} $x \in X$.}

\item[$\boldsymbol{(ii)}$] The endomorphism $f \colon \Omega_X \to \Omega_X$ is nilpotent everywhere, hence $f^2=0$. We will rule out this case, by exploiting the ampleness of $K_X$. In fact, by the nilpotency condition, the sheaf $\operatorname{im}(f)$ injects into $\ker(f)$; then, setting
\begin{equation}
c_1(\ker(f))= L, \quad c_1(\operatorname{im}(f))=L' 
\end{equation}
the divisor $L-L'$ is effective. The semi-stability of $\Omega_X$ with respect to the polarization $K_X$ now gives
\begin{equation}
K_XL=\mu(L) \leq \mu(\Omega_X)=K_X^2/2, \quad K_XL'=\mu(L') \geq \mu(\Omega_X)=K_X^2/2,
\end{equation} 
and so
\begin{equation}
0 \leq K_X(L-L') \leq K_X^2/2 - K_X^2/2 =0,
\end{equation}
that implies $L=L'$. Thus $K_X=c_1(\Omega_X)=2L$, hence $L$ is ample. But this is impossible  because, by a result of Bogomolov, $\Omega_X$ cannot have ample subsheaves of rank $1$, see \cite[Theorem 2]{Reid77}. 
\end{itemize}  
\end{remark}
\bigskip 

Recalling that we have a natural identification between $H^0(X, \, S^n \Omega_X)$ and $H^0(\mathbb{P}(\Omega_X), \, \mathcal{O}_{\mathbb{P}(\Omega_X)}(n))$, from the surjectivity of \eqref{eq:surjectivity-1} we infer that the induced evaluation map
\begin{equation} \label{eq:surjectivity 2}
H^0(X, \, S^n \Omega_X) \otimes \mathcal{O}_{\mathbb{P}(\Omega_X)} \longrightarrow \mathcal{O}_{\mathbb{P}(\Omega_X)}(n)
\end{equation}
is also surjective, and so defines a morphism
\begin{equation} \label{eq:cotang}
\psi_n \colon \mathbb{P}(\Omega_X) \to \mathbb{P}(H^0(X, \, S^n \Omega_X)). 
\end{equation}

\begin{definition} \label{def:cotang}
We call $\psi_n$ the ${n}{\mathrm{th}}$ \emph{cotangent map} of $X$, and we denote its image by $X_n := \psi_n(\PP(\Omega_X)) \subset \PP (H^0(X, \, S^n \Omega_X))$. 
\end{definition}

\sloppy By \cite[Appendix A]{Laz04} there is a relative $n$th Veronese embedding  $\nu_n \colon \mathbb{P}(\Omega_X
) \to \mathbb{P}(S^n \Omega_X)$ such that, for every $x \in X$, the fibre $\mathbb{P}(\Omega_X(x))$ of $\mathbb{P}(\Omega_X)$ over $x$ is sent to a rational normal curve of degree $n$ inside the $n$-dimensional projective space $\mathbb{P}(S^n\Omega_X(x))$. Moreover, passing to projective bundles in the evaluation map \eqref{eq:surjectivity-1}, we get a morphism $e_n \colon \mathbb{P}(S^n \Omega_X) \to  \mathbb{P}(H^0(X, \, S^n \Omega_X))$ and a factorization of $\psi_n$ of the form 
\begin{equation} \label{eq:diagram_Veronese}
\begin{tikzcd}
    \mathbb{P}(\Omega_X) \arrow{rr}{\psi_n} \arrow[swap]{dr}{\nu_n} & & \mathbb{P}(H^0(X, \, S^n \Omega_X)) \\
    & \mathbb{P}(S^n \Omega_X)  \arrow[swap]{ur}{e_n}
\end{tikzcd}
\end{equation}

Let us now consider two important examples: the case where $\Omega_X$ is ample and the case where $\Omega_X$ is globally generated.

\begin{example} \textbf{The case where $\Omega_X$ is ample.} \label{ex:Omega ample}
If $\Omega_X$ is ample, then it is automatically strongly semi-ample, see \cite[Theorem 6.1.10]{Laz04}, and the Chern numbers of $X$ satisfy the inequality $c_1^2-c_2 >0$, see \cite{Kl69}. If $S^n \Omega_X$ is globally generated, by \cite[Example 6.1.5 and Theorem 6.1.15]{Laz04} the ampleness of $\Omega_X$ is equivalent to the fact that $e_n$ is finite onto its image.  Summing up, we can state what follows:

\medskip

\emph{Assume that $S^n \Omega_X$ is globally generated. Then the $n$th cotangent map $\psi_n$ is a finite morphism onto its image $X_n$ if and only if $\Omega_X$ is ample}.

\medskip

For the sake of completeness, let us shortly explain how to construct surfaces for which $\Omega_X$ is strongly semi-ample but not ample. Let $A$ be an abelian 3-fold containing an elliptic curve $E$, and let $X \subset A$ be a sufficiently positive, smooth divisor containing $E$. Then $X$ is a surface of general type  whose Albanese morphism $a_X \colon X \to \operatorname{Alb}(X)$ coincides with the inclusion $X \to A$. Since $A$ contains no rational curves, the same is true for $X$, which is therefore a minimal model with ample $K_X$. Furthermore, since $\Omega_X$ is a quotient of $\Omega_A|_X=\mathcal{O}_X^{\oplus 3}$, it follows that $\Omega_X$ is globally generated, hence strongly semi-ample. However, $\Omega_X$ is not ample: in fact, varieties with ample cotangent bundle are Kobayashi hyperbolic (\cite[Theorem 6.3.26]{Laz04}), in particular, they do not contain any elliptic curves. For a detailed analysis of a similar construction in codimension $2$, see Subsection \ref{subsec:complete-intersections}.

\end{example}

\begin{example} \textbf{The case where $\Omega_X$ itself is globally generated.} \label{ex:Omega_generated} 
For $n=1$ the evaluation map \eqref{eq:surjectivity-1} is the co-differential of the Albanese morphism
\begin{equation}
a_X \colon X \to \operatorname{Alb}(X),
\end{equation} 
hence the cotangent bundle $\Omega_X$ is globally generated if and only if $a_X$ is a local immersion. In this case, $S^n \Omega_X$ is globally generated for all $n \geq 1$, and we have a natural symmetrization homomorphism $\sigma_n \colon S^n H^0(X, \, \Omega_X) \longrightarrow H^0(X, \, S^n  \Omega_X)$, that fits into a commutative diagram  
\begin{equation} \label{dia:symmetrization}
\begin{split}
   \xymatrix{ 
       \PP(\Omega_X) \ar[d]_{\psi_1} \ar[r]^-{\psi_n} & \PP (H^0(X, \, S^n \Omega_X))\ar@{-->}[d]^{\mathbb{P}(\sigma_n)} \\  
  \PP(H^0(X, \, \Omega_X)) \ar[r]^-{\nu_n}  & \PP (S^nH^0(X, \,\Omega_X)).} & 
     \end{split}
\end{equation}
Here $\nu_n$ stands for $n{\mathrm{th}}$ Veronese embedding and the rational map $\mathbb{P}(\sigma_n)$ is an embedding of projective spaces if $\sigma_n$ is surjective, and a linear projection otherwise. \footnote{According to our understanding, not much is known about the behaviour of $\sigma_n$ in general. A result in this direction would provide a higher-dimensional generalization of the celebrated Max Noether's Theorem for curves, see \cite[p. 117]{ACGH85} and the MathOverflow thread [\href{https://mathoverflow.net/questions/273557/}{MO273557}].} By \cite[Proposition 2.14]{Rou09} it follows that if $q(X)=h^0(X, \, \Omega_X) >3$ then $\psi_1$ is generically finite, hence we can draw the following conclusion:

\medskip

\sloppy \emph{Assume that $\Omega_X$ is globally generated and $q(X) >3$. Then the $n$th cotangent map $\psi_n$ is generically finite onto its image for all $n \geq 1$}.   


\medskip

Again for the sake of completeness, let us provide examples where $\Omega_X$ is strongly semi-ample but not globally generated. If $X$ is a a fake projective plane (see \cite{PY07}), then $\Omega_X$ is ample (this is true for every smooth compact complex variety uniformized by the ball $\mathbb{B}^n \subset \mathbb{C}^n$, see \cite[Construction 6.3.36]{Laz04}) and thus strongly semi-ample. However, $h^0(X, \Omega_X)=h^1(X, \, \mathcal{O}_X)=0$, namely, $\Omega_X$ has no global sections at all.

\end{example} 

\sloppy \section{Finiteness of the Gauss map and dimension of the pluri-cotangent image } \label{sec:image pluri-cotangent} \label{sec:finiteness}
 
\begin{assumption} \label{ass:assumption}

From now on, $X$ will denote a surface of general type with strongly semi-ample cotangent bundle $\Omega_X$. Note that we are \emph{neither} assuming that $\Omega_X$ is ample \emph{nor} that $\Omega_X$ is globally generated, having already analyzed these cases before. 
\end{assumption} 
 
\begin{proposition} \label{prop:c1^2-c2>0}
Assume that $S^n \Omega_X$ is globally generated, with $n \geq 3$. Then $\psi_n$ is generically finite onto its image $X_n$ if and only if $c_1^2-c_2 >0$. In this case, we have $\deg X_n=n^3(c_1^2-c_2)/\deg \psi_n$.
\end{proposition}
\begin{proof}
Using the asymptotic form of Riemann-Roch theorem for vector bundles together with the vanishing \eqref{eq:Bogomolov-vanishing-2}, we get
\begin{equation} \label{eq:asymptotic-RR}
h^0(X, \, S^n \Omega_X) \geq \chi(X, \, S^n \Omega_X)= \frac{n^3}{6}(c_1^2-c_2)+O(n^2).
\end{equation}
Thus, the positivity of the second Segre number $c_1^2-c_2$ implies that $S^n \Omega_X$ is big, and so $\psi_n$ is generically finite onto its image since $S^n \Omega_X$ is globally generated. Conversely, suppose that $\psi_n$ is generically finite onto $X_n$ for some $n$. Then, if $\xi \in |\mathcal{O}_{\mathbb{P}(\Omega_X)}(1)|$, the same argument used in the proof of \cite[Proposition 2.15]{Rou09} shows that
\begin{equation}  \label{eq:product-degrees}
0 < \deg X_n \cdot \deg \psi_n= (n\xi)^3 = n^3(c_1^2-c_2).
\end{equation}
\end{proof}


\begin{remark} \label{s_2=0}
Subsection \ref{subsec:lambda-6-sharp} contains a detailed analysis of some examples where $\Omega_X$ is globally generated, $c_1^2-c_2=0$ and $\dim X_n =2$ for all $n \geq 1$. This shows that the assumption about the positivity of the second Segre number in Proposition \ref{prop:c1^2-c2>0} cannot be removed.
\end{remark}

\begin{remark} \label{rmk:Roulleau-Rousseau}
There exist examples of surfaces $X$ of general type with big cotangent bundle and $c_1^2-c_2 \leq 0$, see \cite{RouRous13}. They are obtained by taking the minimal resolution of some singular models with rational double points, hence they contain smooth rational curves and so $\Omega_X$ is not strongly semi-ample (Lemma \ref{lem:ampleness of K_X}).
\end{remark}

In this paper we focus on finding explicit lower bounds on $n$ such that $\psi_n$ is generically finite onto its image. Our arguments are geometric in nature, and generalize the ones used in \cite[Section 2]{Rou09}; furthermore, we use in an essential way some results from  \cite{PR13} and \cite{MU19}.
 
Let $\pi \colon \mathbb{P}(\Omega_X) \to X$ be the structure projection and let us look at the restriction of $\psi_n$ to the fibre $\pi^{-1}(x)$ over a point $x \in X$. Such a fibre is the curve $\PP(\Omega_X(x)) \simeq \PP^1$, and the restriction of $|\mathcal{O}_{\mathbb{P}(\Omega_X)}(n)|$ to it is the complete linear system $|\mathcal{O}_{\mathbb{P}^1}(n)|$, that embeds $\pi^{-1}(x)$ as a rational normal curve $C_x$ of degree $n$ in $\PP (H^0(X, \, S^n \Omega_X))$. There is a unique $n$-dimensional linear subspace $L_x \subset \PP (H^0(X, \, S^n \Omega_X))$ containing $C_x$, so we have a morphism
\begin{equation*}
\mathsf{g}_n \colon X \to \mathbb{G}(n, \, \PP (H^0(X, \, S^n \Omega_X))), \quad x \mapsto L_x.
\end{equation*}

\begin{definition} \label{def:gauss}
We call $\mathsf{g}_n$ the ${n}{\mathrm{th}}$ \emph{Gauss map} of $X$, and we denote its image by $Y_n := \mathsf{g}_n(X) \subset \mathbb{G}(n, \, \PP (H^0(X, \, S^n \Omega_X)))$. 
\end{definition}
Let us now provide an alternative description of the Gauss map. Being $\Omega_X$ globally generated, for every point $x \in X$ there is a surjection 
\begin{equation} \label{eq:surj-Omega}
H^0(X, \, S^n\Omega_X) \to S^n\Omega_X(x) \to 0.
\end{equation}
Since the fibre $S^n\Omega(x)$ of $S^n\Omega$ over $x$ is a vector space of dimension $n+1$, after passing to projective spaces and dualizing we obtain a quotient of dimension $n$ of  $\mathbb{P}(H^0(X, \, S^n\Omega_X))^*$, hence an element
\begin{equation}
 s_x \in \mathbb{G}(n, \, \mathbb{P}(H^0(X, \, S^n\Omega_X)))  \simeq \mathbb{G}(\mathbb{P}(H^0(X, \, S^n\Omega_X))^*, \, n).  
\end{equation}
Thus, we get a morphism 
\begin{equation}
\mathsf{k}_n \colon X \to \mathbb{G}(n, \, \mathbb{P}(H^0(X, \, S^n\Omega_X))), \quad x \mapsto s_x. 
\end{equation}
Following \cite[p. 2230]{MU19} we call $\mathsf{k}_n$ the $n$th \emph{Kodaira map} of $X$.


\begin{proposition}\label{prop:G=H}
The morphisms $\mathsf{g}_n$ and $\mathsf{k}_n$ do actually coincide.
\end{proposition}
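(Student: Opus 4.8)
The plan is to show that the two maps $\mathsf{g}_n$ and $\mathsf{k}_n$ attach the same $n$-dimensional linear subspace to each point $x \in X$, so that it suffices to verify the equality $L_x = s_x$ (viewed inside the same Grassmannian) for every $x$. Both constructions start from the same evaluation surjection $H^0(X, \, S^n \Omega_X) \to S^n \Omega_X(x)$, so I expect the whole proof to reduce to unwinding how this single surjection produces, on the one hand, the rational normal curve $C_x$ and its span $L_x$, and on the other hand, the quotient point $s_x$ in the dual Grassmannian. The key is that the relative Veronese embedding $\nu_n$ in diagram \eqref{eq:diagram_Veronese} identifies the fibre $\PP(\Omega_X(x))$ with the rational normal curve inside $\PP(S^n\Omega_X(x))$, and this latter projective space is precisely the image of $\PP(S^n\Omega_X(x))$ under the linear map induced by \eqref{eq:surj-Omega}.

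First I would fix $x$ and describe $L_x$ intrinsically. The fibre $\pi^{-1}(x) = \PP(\Omega_X(x))$ is sent by $\psi_n$ to the curve $C_x$, which by the factorization through $\nu_n$ is the image of the rational normal curve $\nu_n(\PP(\Omega_X(x))) \subset \PP(S^n\Omega_X(x))$ under the linear embedding $e_n$ restricted to this fibre. Since a rational normal curve of degree $n$ spans exactly an $n$-plane, and $\nu_n(\PP(\Omega_X(x)))$ already spans all of $\PP(S^n\Omega_X(x))$ (which has dimension $n$), the span $L_x$ of $C_x$ is exactly the image of the linear map $\PP(S^n\Omega_X(x)) \hookrightarrow \PP(H^0(X, S^n\Omega_X))$ induced by the fibre of the evaluation map. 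Thus $L_x$ is nothing but the projectivized image of the dual of \eqref{eq:surj-Omega}.

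Next I would compute $s_x$ and match it with $L_x$. The surjection \eqref{eq:surj-Omega} dualizes to an injection $S^n\Omega_X(x)^* \hookrightarrow H^0(X, S^n\Omega_X)^*$, and after projectivizing this identifies $\PP(S^n\Omega_X(x))$ with an $n$-dimensional linear subspace of $\PP(H^0(X, S^n\Omega_X))$; under the conventions of the paper this subspace, regarded as an $n$-dimensional quotient of $\PP(H^0(X, S^n\Omega_X))^*$, is precisely the point $s_x$ via the canonical identification $\mathbb{G}(n, \PP(V)) \simeq \mathbb{G}(\PP(V)^*, n)$. Comparing with the previous paragraph, both $L_x$ and $s_x$ are the image of the same linear map coming from the fibre of the evaluation homomorphism, so they coincide as points of the Grassmannian. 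Since this holds for all $x$, the morphisms agree.

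The main obstacle I anticipate is purely bookkeeping: keeping the projective-space conventions of \cite[Chapter 6]{Laz04} straight, in particular the distinction between $\PP$ of one-dimensional quotients versus subspaces, and correctly tracking where dualizing occurs. One must be careful that the ``span'' description producing $L_x$ and the ``quotient'' description producing $s_x$ are matched under the right instance of $\mathbb{G}(n, \PP(V)) \simeq \mathbb{G}(\PP(V)^*, n)$, rather than off by a dual. Once the identification of conventions is pinned down, the geometric content — that the span of the Veronese-image of a fibre equals the projective image of that fibre's evaluation quotient — is immediate, and the equality of morphisms follows pointwise. I would also remark that both $\mathsf{g}_n$ and $\mathsf{k}_n$ are genuine morphisms (the span has constant dimension $n$ because $C_x$ is always a rational normal curve of degree $n$), so pointwise equality upgrades to equality of maps.
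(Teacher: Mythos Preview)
Your proposal is correct and follows essentially the same approach as the paper: both identify $L_x$ with the image of $\PP(S^n\Omega_X(x))$ under the fibre of $e_n$ in diagram \eqref{eq:diagram_Veronese}, and then observe that this image is by definition $s_x$. The paper dispatches this in two sentences, whereas you spell out the convention-tracking in detail, but the underlying argument is identical.
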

\begin{proof}
Diagram \eqref{eq:diagram_Veronese} shows that the $n$-dimensional linear space $L_x$ containing $C_x$ coincides with the image of $\mathbb{P}(S^n \Omega_X(x))$
via the map $e_n$. By construction, this image is precisely $s_x$. 
\end{proof}

\begin{proposition} \label{prop:gauss-finite}
The $n{\mathrm{th}}$ Gauss map $\mathsf{g}_n$ is a finite morphism onto its image $Y_n$. In particular, we have  $\dim Y_n =2$ for all $n \geq 1$.
\end{proposition}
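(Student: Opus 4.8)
The plan is to show that $\mathsf{g}_n$ is a finite morphism, i.e. that it is quasi-finite (has finite fibres), since $X$ is projective and $\mathsf{g}_n$ is a morphism to a projective Grassmannian, so properness is automatic and quasi-finite plus proper yields finite. By Proposition \ref{prop:G=H} it suffices to analyze the Kodaira map $\mathsf{k}_n$, which is arguably more tractable because it is defined directly through the fibrewise evaluation surjection \eqref{eq:surj-Omega}. Concretely, $\mathsf{k}_n(x)$ records the $n$-dimensional quotient $\mathbb{P}(H^0(X,\,S^n\Omega_X))^* \to \mathbb{P}(S^n\Omega_X(x))^*$ determined by evaluation at $x$; so two points $x,y \in X$ have $\mathsf{g}_n(x)=\mathsf{g}_n(y)$ precisely when the linear spans $L_x$ and $L_y$ coincide inside $\PP(H^0(X,\,S^n\Omega_X))$.

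First I would reduce finiteness to the statement that $\mathsf{g}_n$ has no positive-dimensional fibre. Suppose for contradiction that some fibre contains an irreducible curve $B \subset X$; then $L_x = L$ is a fixed $n$-plane for all $x \in B$. Since each fibre $\pi^{-1}(x)$ is sent by $\psi_n$ into $L_x = L$, the restriction of $\psi_n$ to $\pi^{-1}(B)$ factors through the fixed linear space $L \simeq \PP^n$, and in particular its image is contained in $L$. The second step is to extract a contradiction from the geometry over $B$: the surface $\pi^{-1}(B) \subset \PP(\Omega_X)$ maps under $\psi_n = e_n \circ \nu_n$ into the $n$-plane $L$, so its image is at most $n$-dimensional but, more importantly, all the rational normal curves $C_x$ for $x \in B$ lie in the \emph{same} $L$. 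Here is where I would invoke the strong semi-ampleness and the underlying positivity of $\mathcal{O}_{\PP(\Omega_X)}(1)$: the key point is that $\mathcal{O}_{\PP(\Omega_X)}(n)$ restricted to the ruled surface $\pi^{-1}(B)$ is the pullback of $\mathcal{O}_L(1)$ under a map into $L$, and one computes the top self-intersection (or the degree of $\psi_n|_{\pi^{-1}(B)}$) to see that having all $C_x$ share a single $n$-plane forces a numerical degeneration incompatible with $K_X$ being ample (Lemma \ref{lem:ampleness of K_X}).

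A cleaner route to the same contradiction, which I expect to be the main line of argument, goes through the \emph{differential} of $\mathsf{k}_n$ at a point and a direct analysis of the evaluation map \eqref{eq:surj-Omega}: if $\mathsf{k}_n$ contracts the tangent direction of $B$ at $x$, then to first order the quotient $H^0(X,\,S^n\Omega_X) \to S^n\Omega_X(x)$ does not vary as $x$ moves along $B$, which says that the filtration of $H^0$ by vanishing at $x$ is constant along $B$. This would produce a nonzero section of $S^n\Omega_X$ vanishing on a positive-dimensional locus to high order, and one bounds things using the semistability of $S^n\Omega_X$ with respect to $K_X$ from Proposition \ref{prop:semistability of Omega}, together with the ampleness of $K_X$, to rule it out. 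I anticipate the main obstacle to be precisely this infinitesimal step: translating ``$L_x$ is locally constant along $B$'' into an honest destabilizing subsheaf or an ample rank-one subsheaf of $\Omega_X$, which Bogomolov's theorem (used already in Remark \ref{rmk:trace-zero endomorphisms}) forbids. Once a positive-dimensional fibre is excluded, $\mathsf{g}_n$ is quasi-finite and proper, hence finite; since $\dim X = 2$ and $\mathsf{g}_n$ is finite, $\dim Y_n = 2$ for all $n \geq 1$, as claimed.
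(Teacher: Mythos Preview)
Your approach is genuinely different from the paper's. The paper does not argue by contradiction on a contracted curve at all; instead it invokes the results of Mistretta--Urbinati \cite{MU19}. Their Lemma 3.3 gives, for every $t\geq 1$, a factorization $\mathsf{k}_n = u \circ \mathsf{k}_{tn}$ with $u$ finite, and their Theorem 3.4 shows that for $t\gg 0$ the map $\mathsf{k}_{tn}$ stabilizes to a map $\mathsf{k}_{\mathbb{G}}$ which in turn factors as a finite map composed with the Iitaka fibration of $\det\Omega_X = K_X$. Since $K_X$ is ample (Lemma \ref{lem:ampleness of K_X}), that Iitaka fibration is an isomorphism onto its image, and $\mathsf{k}_n$ is exhibited as a composition of finite maps. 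This is short but relies on the external machinery of \cite{MU19}.

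Your direct strategy is sound, but you have not completed it, and the completion is much simpler than the semistability/Bogomolov detour you sketch. Suppose $\mathsf{g}_n$ contracts an irreducible curve $B\subset X$. Then the kernel of the evaluation map $H^0(X,\,S^n\Omega_X)\to S^n\Omega_X(x)$ is a fixed subspace $K$ for all $x\in B$. Since $S^n\Omega_X$ is globally generated, the $(n+1)$-dimensional quotient $H^0/K$ surjects onto each fibre $S^n\Omega_X(x)$ for $x\in B$, and by dimension this surjection is an isomorphism at every point; hence $S^n\Omega_X|_B \cong \mathcal{O}_B^{\,n+1}$. Taking determinants and using $c_1(S^n\Omega_X)=\frac{n(n+1)}{2}c_1(\Omega_X)$ gives $\omega_X^{\,n(n+1)/2}|_B \cong \mathcal{O}_B$, i.e.\ $K_X\cdot B = 0$, contradicting the ampleness of $K_X$. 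No infinitesimal analysis, destabilizing subsheaf, or appeal to Bogomolov is needed; your ``numerical degeneration incompatible with $K_X$ ample'' is exactly $K_X\cdot B=0$, obtained in one line from the triviality of $S^n\Omega_X|_B$. With this step filled in, your argument is a valid and more elementary alternative to the paper's proof.
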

\begin{proof}
By Proposition \ref{prop:G=H}, it is equivalent to prove the result for the Kodaira map $\mathsf{k}_n$. 
Let $\widetilde{Y}_n \to Y_n$ be the normalization map of $Y_n$, and let $\tilde{\mathsf{k}}_n \colon X \to \widetilde{Y}_n$ be the corresponding lifting of $\mathsf{k}_n$ (which exists since $X$ is smooth, hence normal). If the result is true for $\tilde{\mathsf{k}}_n$ then it is true for $\mathsf{k}_n$ as well, because the normalization is a finite map. Thus, we may assume that $Y_n$ is normal.

Next, the assumption that $S^n \Omega_X$ is globally generated implies that $S^{tn} \Omega_X$ is globally generated for all positive integers $t$. According to \cite[Lemma 3.3]{MU19}, we have a factorization  
\begin{equation}
\begin{tikzcd}
    X \arrow{rr}{\mathsf{k}_n} \arrow[swap]{dr}{\mathsf{k}_{tn}} & & Y_n \\
    & Y_{tn}  \arrow[swap]{ur}{u}
\end{tikzcd}
\end{equation}
where $u \colon Y_{tn} \to Y_{n}$ is a finite map. Furthermore, by \cite[Theorem 3.4]{MU19}, there exists a diagram
\begin{equation}
\begin{tikzcd}
    X \arrow{rr}{\mathsf{k}_{\textrm{det}}} \arrow[swap]{dr}{\mathsf{k}_{\mathbb{G}}} & & Y_{\infty} \arrow{dl}{v} \\
    & Y_{\mathbb{G}}  
\end{tikzcd}
\end{equation}
where $\mathsf{k}_{\textrm{det}}$ is the Iitaka fibration induced by $K_X=\det \Omega_X$ and $v$ is a finite map, such that for $t \gg 0$ we have $Y_{tn}=Y_{\mathbb{G}}$ and $\mathsf{k}_{tn}= \mathsf{k}_{\mathbb{G}}$. So, for $t$ sufficiently large, we get 
\begin{equation}
\mathsf{k}_{n}= u \circ \mathsf{k}_{tn}=u \circ v \circ \mathsf{k}_{\textrm{det}}.
\end{equation}
Since $K_X$ is ample (Lemma \ref{lem:ampleness of K_X}), it follows that $\mathsf{k}_{\textrm{det}}$ is an isomorphism onto its image; therefore $\mathsf{k}_{n}$ is a composition of finite maps, and the proof is complete. 

\end{proof}

\begin{lemma} \label{lem:restriction-pi}
For all $p \in X_n$ (the image of $\psi_n$ in $\PP (H^0(X, \, S^n \Omega_X))$), the restriction of $\pi \colon \PP(\Omega_X) \to X$ to $\psi_n^{-1}(p)$ is injective. 
\end{lemma}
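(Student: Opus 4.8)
The plan is to reduce the statement to the injectivity of $\psi_n$ along the fibres of $\pi$. Concretely, I would take two points $z_1, z_2 \in \psi_n^{-1}(p)$ and assume they have the same image under the projection, say $\pi(z_1) = \pi(z_2) = x$; the goal is to deduce $z_1 = z_2$, which is exactly the assertion that $\pi|_{\psi_n^{-1}(p)}$ is injective. By the assumption, both points then lie on the single fibre $\pi^{-1}(x) \simeq \PP^1$.

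The key step is to invoke the description of $\psi_n$ recorded right after Definition \ref{def:cotang}: the restriction of $\mathcal{O}_{\PP(\Omega_X)}(n)$ to the fibre $\pi^{-1}(x)$ is $\mathcal{O}_{\PP^1}(n)$, and the associated linear system is the complete system $|\mathcal{O}_{\PP^1}(n)|$. Consequently $\psi_n|_{\pi^{-1}(x)}$ is precisely the embedding of $\pi^{-1}(x)$ onto the rational normal curve $C_x$ of degree $n$, and in particular it is injective. Since by hypothesis $\psi_n(z_1) = p = \psi_n(z_2)$ and both $z_i$ lie in $\pi^{-1}(x)$, this forces $z_1 = z_2$, completing the argument.

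The only point requiring care, and where I expect the real (albeit minor) obstacle to lie, is the claim that the restricted linear system is complete, that is, that the restriction map $H^0(X, \, S^n \Omega_X) \to H^0(\PP^1, \, \mathcal{O}_{\PP^1}(n))$ is surjective for every $x \in X$. This is exactly where strong semi-ampleness enters: the global generation of $S^n \Omega_X$ yields the fibrewise surjectivity of the evaluation map \eqref{eq:surjectivity-1}, hence guarantees that $\pi^{-1}(x)$ is mapped by the full linear system and that $\psi_n|_{\pi^{-1}(x)}$ is an honest embedding rather than merely a finite morphism. Once this is granted, no further input is needed; in particular the finiteness of the Gauss map established in Proposition \ref{prop:gauss-finite} plays no role in this lemma.
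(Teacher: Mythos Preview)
Your argument is correct and is essentially the same as the paper's: both reduce to the fact that $\psi_n$ restricted to any fibre $\pi^{-1}(x)$ is the embedding onto the rational normal curve $C_x$, so $\pi^{-1}(x)\cap\psi_n^{-1}(p)$ contains at most one point. Your additional remark that surjectivity of the restriction $H^0(X,\,S^n\Omega_X)\to H^0(\PP^1,\,\mathcal{O}_{\PP^1}(n))$ follows from the global generation of $S^n\Omega_X$ is a useful clarification of a point the paper takes as already established.
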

\begin{proof}
Given $x \in X$, the intersection $\pi^{-1}(x) \cap \psi_n^{-1}(p)$ is either empty or consists of a single point, because the restriction $\psi_n \colon \pi^{-1}(x) \to C_x \subset \PP (H^0(X, \, S^n \Omega_X))$ is an embedding.
\end{proof}

\begin{remark}\textbf{The degree of the Gauss map divides the degree of the pluri-cotangent map.} \label{rmk:pluricotangent-factors-through-Gauss}
Denoting by $U_n$ the universal vector bundle over the affine Grassmannian of $(n+1)$-dimensional subspaces of $H^0(X, \, S^n \Omega_X)$ and by $\PP(U_n)$ the corresponding projectivization, we have a commutative diagram
\begin{equation*}
\begin{split}
   \xymatrix{ 
       \PP(\Omega_X) \ar@/^2.0pc/@[black][rr]^{\psi_n}\ar[d]_{\pi} \ar[r]^-{\tilde{\psi}_n} & \PP(U_n) \ar[d]^-{\pi_{2}} \ar[r]^-{\pi_1}  & \mathbb{P}(H^0(X, \, S^n \Omega_X)) \\  
 X \ar[r]^-{\mathsf{g}_n}  & \mathbb{G}(n, \, \mathbb{P}(H^0(X, \, S^n \Omega_X))),} & 
     \end{split}
\end{equation*}
where $\pi_1$, $\pi_2$ are the natural projections and $\tilde{\psi}_n$ is such that $\pi_1 \circ \tilde{\psi}_n = \psi_n$.
Since the restriction of $\psi_n$ to the fibres of $\pi$ is an embedding, it follows that $\tilde{\psi}_n$ is a finite map onto its image, whose degree is the same as the degree of the Gauss map $\mathsf{g}_n$. So, assuming that  the pluri-cotangent map $\psi_n$ is generically finite onto its image $X_n$, and denoting by $\deg \psi_n$ the degree of $\psi_n \colon \PP(\Omega_X) \to X_n$, we obtain
\begin{equation} \label{eq:pluricotangent-factors-through-Gauss}
\deg \psi_n = \deg \mathsf{g}_n \cdot \deg \pi_1',
\end{equation}
where $\pi_1'$ is the restriction of $\pi_1$ to the image of $\tilde{\psi}_n$. As a consequence, if $\psi_n$ is generically finite onto its image, then $ \deg \mathsf{g}_n$ divides $\deg \psi_n$.

\end{remark}

If $p \in X_n$, let us denote by $D_p'$ the subvariety $\pi(\psi_n^{-1}(p)) \subset X$. By Lemma \ref{lem:restriction-pi}, we have
\begin{equation} \label{eq:dim-psi-1}
\dim D_p'=\dim \psi_n^{-1}(p).  
\end{equation}

\begin{proposition}  \label{prop:p-in-Cx}
Let $x \in X$. Then $x \in D_p'$ if and only if $p \in C_x$. 
\end{proposition}
\begin{proof}
If $x \in D_p'$ then $x=\pi(z)$, with $z \in \psi_n^{-1}(p)$. Thus $z \in \pi^{-1}(x)$ and so $p=\psi_n(z) \in \psi_n(\pi^{-1}(x))=C_x$. Conversely, if $p \in C_x$ then $p=\psi_n (w)$, with $w \in \pi^{-1}(x)$. Then $x=\pi(w) \in \pi(\psi_n^{-1}(p))=D_p'$. 
\end{proof}

\begin{proposition} \label{prop:dim-Dp}
For all $p \in X_n$, the subvariety $\psi^{-1}_n(p) \subset \mathbb{P}(\Omega_X)$ has dimension at most $1$. Hence $D_p' \subset X$ has dimension at most $1$, too.
\end{proposition}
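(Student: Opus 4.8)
The plan is to reduce the statement to ruling out a two-dimensional fibre. First I would observe that, by Lemma \ref{lem:restriction-pi}, the projection $\pi$ restricts to an \emph{injective} morphism on $\psi_n^{-1}(p)$, so that $\dim \psi_n^{-1}(p) = \dim D_p' \le \dim X = 2$; in view of \eqref{eq:dim-psi-1} it therefore suffices to exclude the case $\dim D_p' = 2$, equivalently $D_p' = X$, since $X$ is an irreducible surface. Thus the entire content of the proposition is the statement that no fibre of $\psi_n$ can be a surface.

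To exclude this, I would argue by contradiction, assuming that some irreducible component $S$ of $\psi_n^{-1}(p)$ has dimension $2$. Because $\pi|_{\psi_n^{-1}(p)}$ is injective, $\pi(S)$ is a closed irreducible surface inside $X$, hence $\pi(S)=X$, and $\pi|_S \colon S \to X$ is a bijective morphism onto the smooth, hence normal, surface $X$. Working over $\CC$, Zariski's Main Theorem then upgrades this to an isomorphism, so $\pi|_S$ admits an inverse section $s \colon X \to \PP(\Omega_X)$ with image $S$. By the universal property of $\PP(\Omega_X)$ in the quotient convention, such a section corresponds to a line-bundle quotient $\Omega_X \to L \to 0$, where $L := s^*\mathcal{O}_{\PP(\Omega_X)}(1)$.

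The next step is to exploit the fact that $S$ is contracted by $\psi_n$. Since $\psi_n$ is defined by the surjection \eqref{eq:surjectivity 2}, one has $\psi_n^*\mathcal{O}(1) = \mathcal{O}_{\PP(\Omega_X)}(n)$; restricting to $S$ and using $\psi_n(S)=\{p\}$ gives $\mathcal{O}_{\PP(\Omega_X)}(n)|_S \simeq \mathcal{O}_S$, and pulling back along $s$ yields $L^{\otimes n}\simeq \mathcal{O}_X$. Hence $c_1(L)$ is numerically trivial, so $\mu_{K_X}(L) = c_1(L)\cdot K_X = 0$. On the other hand $L$ is a quotient line bundle of $\Omega_X$, while $\mu_{K_X}(\Omega_X) = \tfrac12\, c_1(\Omega_X)\cdot K_X = \tfrac12 K_X^2 > 0$ because $K_X$ is ample (Lemma \ref{lem:ampleness of K_X}) and $c_1(\Omega_X)=K_X$. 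This contradicts the $K_X$-semistability of $\Omega_X$ established in Proposition \ref{prop:semistability of Omega}, which forces $\mu_{K_X}(L) \ge \mu_{K_X}(\Omega_X)$ for every quotient line bundle $L$. Therefore no such $S$ exists and $\dim\psi_n^{-1}(p) = \dim D_p' \le 1$.

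I expect the main obstacle to be the passage from a two-dimensional fibre to a genuine section: one must verify that the relevant component $S$ maps \emph{isomorphically}, not merely bijectively, onto $X$, and that the resulting $L$ is honestly a quotient of $\Omega_X$ to which Bogomolov semistability applies. Once this is secured, the numerical contradiction is immediate and, notably, independent of the sign of $c_1^2-c_2$, which is consistent with the fact that the proposition must also hold in the Veronese examples of Subsection \ref{subsec:lambda-6-sharp} where $c_1^2-c_2=0$. As a cross-check, I could instead compute in the Chow ring of $\PP(\Omega_X)$: writing $[S] = \xi + \pi^*D$ with $\xi = c_1(\mathcal{O}_{\PP(\Omega_X)}(1))$ and using $\xi|_S \equiv 0$ together with $\xi^3 = c_1^2 - c_2$, one is led to $c_2 = 0$, which is impossible for a surface of general type since there $c_1^2 > 0$ and $c_1^2 \le 3c_2$ force $c_2>0$.
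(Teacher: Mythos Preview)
Your argument is correct and follows essentially the same route as the paper's proof: both reduce to excluding a $2$-dimensional fibre, use Lemma~\ref{lem:restriction-pi} together with Zariski's Main Theorem to produce a section $s \colon X \to \PP(\Omega_X)$ and hence a line-bundle quotient $\Omega_X \to L$ with $L^{\otimes n}\simeq\mathcal{O}_X$, and then derive a contradiction from Proposition~\ref{prop:semistability of Omega}. The only cosmetic difference is that the paper phrases the contradiction via the quotient $S^n\Omega_X \to L^{\otimes n}\simeq\mathcal{O}_X$ and the semistability of $S^n\Omega_X$, whereas you invoke the semistability of $\Omega_X$ directly against the slope-zero quotient $L$; both are immediate consequences of the same proposition.
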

\begin{proof}
Write $Z:=\psi_n^{-1}(p)$ and $N:=h^0(X, \, S^n \Omega_X)$. Since $\psi_n$ is not a constant map, we have $\dim Z \leq 2$. By contradiction, assume   $\dim Z = 2$. Then the restriction $\pi|_{Z} \colon Z \to X$ is a bijective morphism by Lemma \ref{lem:restriction-pi} and so, since $X$ is normal, it is an isomorphism by Zariski's Main Theorem, see \cite[Chapter III]{Mum99}. Thus we get a regular section $t \colon X \to \PP(\Omega_X)$ of the projective bundle $\pi \colon \PP(\Omega_X) \to X$, that in turn corresponds to a rank $1$  quotient $\Omega_X \to \mathcal{L}$, where $\mathcal{L} = t^* \mathcal{O}_{\mathbb{P}(\Omega_X)}(1)$, see \cite[Chapter II, Proposition 7.12]{Ha77}. Since $\psi_n \circ t$ contracts $X$ to the point $p \in \PP(H^0(X, \, S^n \Omega_X)) \simeq \mathbb{P}^{N-1}$, it follows that $(\psi_n \circ t)^* \mathcal{O}_{\mathbb{P}^{N-1}}(1)$ is the trivial line bundle on $X$, and so
\begin{equation} \label{eq:trivial_line_bundle}
\mathcal{L}^n = t^* \mathcal{O}_{\mathbb{P}(\Omega_X)}(n) = t^* \psi_n^* \mathcal{O}_{\PP^{N-1}}(1)= \mathcal{O}_X.
\end{equation}
So, taking the $n$th symmetric product of $\Omega_X \to \mathcal{L}$, we get a quotient $S^n \Omega_X \to \mathcal{O}_X$, contradicting the semi-stability of $S^n \Omega_X$ with respect to $K_X$, see Proposition \ref{prop:semistability of Omega}. Hence the only possibility is  $\dim Z \leq 1$, and this proves the first statement. The second statement follows from \eqref{eq:dim-psi-1}.
\end{proof}


\begin{definition} \label{def:exceptional-point}
A point $p \in X_n$ is called \emph{exceptional for} $\psi_n$ if the fibre $\psi_n^{-1}(p)$ has dimension $1$. The set of such exceptional points will be denoted by $\mathrm{exc}(\psi_n)$.
\end{definition}

Let $p$ be an exceptional point for $\psi_n$, so that $\dim D_p'=1$, and let $D_p \subseteq D_p'$ be an irreducible component of dimension $1$.  If we set $\Sigma:=\psi_n(\pi^{-1}(D_p))$, then we have
\begin{equation}\label{eq:Sigma}
\Sigma = \bigcup_{x \in D_p}C_x.
\end{equation}

\begin{proposition} \label{prop:family-rational-normal}
The variety $\Sigma$ is a surface in $\PP (H^0(X, \, S^n \Omega_X))$ containing a $1$-dimensional family of rational normal curves of degree $n$ passing through $p$. More precisely, for every point $q \in \Sigma$, different from $p$, there exists a rational normal curve of degree $n$ contained in $\Sigma$ and joining $p$ and $q$. 
\end{proposition}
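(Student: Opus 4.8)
The plan is to prove the two assertions of the proposition in turn: first that $\Sigma$ is a surface (i.e. two-dimensional), and then the stronger statement that every point of $\Sigma$ other than $p$ lies on a rational normal curve of degree $n$ contained in $\Sigma$ and passing through $p$. I would begin by recording the basic structural facts coming from the preceding setup. By definition $D_p$ is an irreducible curve in $X$, and $\pi^{-1}(D_p) \subset \PP(\Omega_X)$ is therefore an irreducible surface, being a $\PP^1$-bundle over the curve $D_p$. Since $\psi_n$ is generically finite on $\PP(\Omega_X)$ (we are in the exceptional-locus analysis, so Theorem A / Proposition \ref{prop:dim-Dp} apply) but contracts each fibre component only in controlled ways, I would argue that $\psi_n$ does not contract the irreducible surface $\pi^{-1}(D_p)$ to a curve or point, so that $\Sigma = \psi_n(\pi^{-1}(D_p))$ is genuinely two-dimensional.

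\smallskip
For the dimension count, the key observation is that by Proposition \ref{prop:p-in-Cx} we have $x \in D_p'$ if and only if $p \in C_x$; hence for every $x \in D_p$ the rational normal curve $C_x = \psi_n(\pi^{-1}(x))$ passes through $p$, which immediately gives the union description \eqref{eq:Sigma}. Each $C_x$ is a rational normal curve of degree $n$ (the image of the fibre $\pi^{-1}(x) \simeq \PP^1$ under the embedding induced by the complete system $|\mathcal{O}_{\PP^1}(n)|$, as established in the paragraph preceding Definition \ref{def:gauss}), so $\dim C_x = 1$. To see that $\Sigma$ has dimension exactly $2$ rather than $1$, I would argue that the curves $C_x$ cannot all coincide as $x$ ranges over the one-dimensional family $D_p$: if they did, then $\psi_n$ would contract $\pi^{-1}(D_p)$ onto the single curve $C_{x_0}$, and since $\pi|_{\psi_n^{-1}(p)}$ is injective by Lemma \ref{lem:restriction-pi}, one obtains a contradiction with the generic finiteness, or alternatively with the semistability argument of Proposition \ref{prop:dim-Dp}. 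Thus a general fibre of $\psi_n|_{\pi^{-1}(D_p)}$ over $\Sigma$ is finite, and $\dim \Sigma = \dim \pi^{-1}(D_p) = 2$.

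\smallskip
For the second, sharper assertion, I would take an arbitrary $q \in \Sigma$ with $q \neq p$ and use \eqref{eq:Sigma}: by definition of $\Sigma$ as the union, there exists some $x \in D_p$ with $q \in C_x$. Since $x \in D_p \subseteq D_p'$, Proposition \ref{prop:p-in-Cx} gives $p \in C_x$ as well. Hence $C_x$ is a single rational normal curve of degree $n$, contained in $\Sigma$ by construction, and containing both $p$ and $q$. This directly realizes the claim that $p$ and $q$ are joined by such a curve inside $\Sigma$, and that the family of these curves (parametrized by $D_p$) is one-dimensional, all members passing through $p$.

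\smallskip
The main obstacle I anticipate is the rigorous justification that $\Sigma$ is genuinely two-dimensional rather than a curve — i.e. ruling out the degenerate scenario in which the $C_x$ for $x \in D_p$ sweep out only a one-dimensional locus (for instance if they were all equal, or if they overlapped so heavily that their union stayed one-dimensional). Controlling this requires combining the injectivity of $\pi$ on fibres of $\psi_n$ (Lemma \ref{lem:restriction-pi}) with the irreducibility of $\pi^{-1}(D_p)$ and the fact that $\psi_n$ restricted to each individual fibre $\pi^{-1}(x)$ is an embedding; the cleanest route is probably to show that the restriction $\psi_n|_{\pi^{-1}(D_p)} \colon \pi^{-1}(D_p) \to \Sigma$ is generically finite, so that the image has the same dimension as the source. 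Everything else is a formal unwinding of the definitions and of Proposition \ref{prop:p-in-Cx}.
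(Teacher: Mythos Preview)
Your second assertion---joining $p$ and $q$ by some $C_x$ via Proposition \ref{prop:p-in-Cx}---is exactly the paper's argument and is fine. The problem is in your justification that $\dim \Sigma = 2$.

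You invoke generic finiteness of $\psi_n$, citing Theorem A. But Proposition \ref{prop:family-rational-normal} is a lemma \emph{on the way to} Theorem A (via Proposition \ref{prop:image-n-cotangent}): it is stated for an arbitrary exceptional point $p$, and in the application to Proposition \ref{prop:image-n-cotangent} one is precisely in the case where $\psi_n$ is \emph{not} generically finite and every point of $X_n$ is exceptional. So assuming generic finiteness here is circular. Your fallback, ``the semistability argument of Proposition \ref{prop:dim-Dp}'', only rules out $2$-dimensional fibres of $\psi_n$; it does not prevent $\psi_n|_{\pi^{-1}(D_p)}$ from having $1$-dimensional fibres onto a curve, which is exactly the degenerate scenario you need to exclude. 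Concretely, if all $C_x$ coincided with a single curve $C$, then for each $q \in C$ the preimage $\psi_n^{-1}(q) \cap \pi^{-1}(D_p)$ would be a section of $\pi$ over $D_p$---a curve on which $\pi$ is injective---so neither Lemma \ref{lem:restriction-pi} nor Proposition \ref{prop:dim-Dp} gives a contradiction.

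The missing ingredient is the finiteness of the Gauss map, Proposition \ref{prop:gauss-finite}. If $\dim \Sigma = 1$, then (since each $C_x$ is irreducible and $D_p$ is irreducible) all the $C_x$ coincide with a single rational normal curve $C$; hence the $n$-plane $L_x = \langle C_x \rangle$ is constant along $D_p$, so $\mathsf{g}_n$ contracts the curve $D_p$ to a point, contradicting Proposition \ref{prop:gauss-finite}. This is the paper's argument, and it is what you should slot in place of the generic-finiteness appeal.
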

\begin{proof}
The fact that $\Sigma$ has dimension at least $1$ is an immediate consequence of \eqref{eq:Sigma}. By contradiction, assume $\dim \Sigma =1$; then there is a rational normal curve $C$ of degree $n$ such that $C_x=C$ for all $x \in D_p$. This in turn implies that the $n$-plane $L_x \subset\PP (H^0(X, \, S^n \Omega_X))$ is constant on $D_p$, and so the $n{\mathrm{th}}$ Gauss map $\mathsf{g}_n \colon X \to \mathbb{G}(n, \, \PP (H^0(X, \, S^n \Omega_X))$ contracts $D_p$ to a point, against Proposition \ref{prop:gauss-finite}. It follows that $\Sigma \subset \PP (H^0(X, \, S^n \Omega_X))$ is a surface. If $q \in \Sigma$, then $q \in C_x$ for some $x \in D_p$; thus we have $p \in C_x$ by Proposition \ref{prop:p-in-Cx}, hence the rational normal curve $C_x$ joins $p$ and $q$.  
\end{proof}

\begin{definition}
Let $N > 3$ be a positive integer. An irreducible variety $V \subset \PP^{N-1}$ is said to be $2$-\emph{covered by curves of degree} $n$ if a general pair of points of $V$ can be joined by an irreducible curve of degree $n$.  
\end{definition}

The case of surfaces $2$-covered by curves was classically considered in \cite{Bom21}; a modern treatment can be found in \cite[pp. 718-722]{PR13}, see also \cite[Theorem 2.8]{Ion05} and \cite[Th\'{e}or\`{e}me 1.5]{PT13}. It turns out that those maximizing the dimension of the ambient space $\mathbb{P}^{N-1}$ are precisely the Veronese embeddings of $\mathbb{P}^2$. \footnote{The simplest way to obtain a  surface which is $2$-covered by curves of degree $n$ is taking a surface $X$ of degree $n$ embedded in $\mathbb{P}^{N-1}$, so that the $2$-covering family is provided by the hyperplane sections.  If $X$ is non-degenerate then $N \leq  n+2$, which is much smaller of the optimal value $\frac{1}{2}(n+1)(n+2)$, achieved by the $n$th Veronese surface. }

\begin{proposition} \label{prop:n-covered}
If $\Sigma \subset \mathbb{P}^{N-1}$ is  a non-degenerate surface which is $2$-covered by curves of degree $n$, then $N \leq \frac{1}{2}(n+1)(n+2)$. Moreover,  equality holds if and only if $\Sigma$ is projectively equivalent to the $n{\mathrm{th}}$ Veronese surface $\nu_n(\mathbb{P}^2)$ and, in this case, every curve in the $2$-covering family is a rational normal curve of degree $n$ and there exists a unique such a curve passing through two distinct points of $\Sigma$.
\end{proposition}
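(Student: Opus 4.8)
The plan is to bound the linear span of $\Sigma$ by controlling its osculating spaces at a general point. Fix a general point $p \in \Sigma$ and, for each $k \geq 0$, let $\mathrm{Osc}^k_p \Sigma \subseteq \PP^{N-1}$ be the $k$-th osculating space, that is, the projective span of all partial derivatives of order $\leq k$ of a local holomorphic lift of a parametrization of $\Sigma$ to the affine cone $\CC^N$ over $\PP^{N-1}$. Since $\Sigma$ is a surface, such derivatives are indexed by pairs $(a,b)$ with $a+b \leq k$, so $\dim \mathrm{Osc}^k_p \Sigma \leq \binom{k+2}{2}-1$. The crux of the inequality is that the \emph{whole} space is already osculated at order $n$: I will show $\mathrm{Osc}^n_p \Sigma = \PP^{N-1}$, which gives at once $N-1 \leq \binom{n+2}{2}-1$, i.e. $N \leq \frac{1}{2}(n+1)(n+2)$.

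For this key step, take $q \in \Sigma$ general and let $C = C_{p,q}$ be a degree-$n$ curve of the covering family joining $p$ and $q$. Because $(p,q)$ is a general pair, $p$ is a general point of $C$, so in characteristic zero the $n$-th osculating space of $C$ at $p$ equals its linear span, $\mathrm{Osc}^n_p C = \langle C \rangle$, where $\dim \langle C \rangle \leq n$ as $C$ is an irreducible curve of degree $n$. Since $C \subseteq \Sigma$, every jet of $C$ at $p$ is a jet of $\Sigma$ at $p$, whence $\langle C \rangle = \mathrm{Osc}^n_p C \subseteq \mathrm{Osc}^n_p \Sigma$, and in particular $C \subseteq \mathrm{Osc}^n_p \Sigma$. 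Letting $q$ vary, the curves $C_{p,q}$ sweep out a dense subset of $\Sigma$; as $\mathrm{Osc}^n_p \Sigma$ is closed we get $\Sigma \subseteq \mathrm{Osc}^n_p \Sigma$, and non-degeneracy forces $\mathrm{Osc}^n_p \Sigma = \langle \Sigma \rangle = \PP^{N-1}$, proving the claim and hence the bound.

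Now suppose $N = \frac{1}{2}(n+1)(n+2)$. Then the flag $\mathrm{Osc}^0_p \Sigma \subset \mathrm{Osc}^1_p \Sigma \subset \cdots \subset \mathrm{Osc}^n_p \Sigma = \PP^{N-1}$ must have every dimension jump maximal, since the jump at step $k$ is at most $k+1$ and the jumps sum to $N-1 = \binom{n+2}{2}-1$. Thus $\dim \mathrm{Osc}^k_p \Sigma = \binom{k+2}{2}-1$ for all $0 \leq k \leq n$; equivalently, at a general point the partial derivatives of order $\leq n$ of a local lift are linearly independent. This is exactly the maximal-osculation behaviour of the Veronese surface, and it is the infinitesimal shadow of the statement $\Sigma \simeq \nu_n(\PP^2)$ that must now be globalized.

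To finish I would analyse the \emph{web} cut out on $\Sigma$ by the covering family. Through a general point passes a one-parameter subfamily whose tangent directions fill $\PP(T_p \Sigma) \simeq \PP^1$, and the task is to show that this web has the incidence behaviour of the web of lines on $\PP^2$ — in particular, that two general points are joined by a \emph{unique} member. Granting this, $\Sigma$ is birational to $\PP^2$ with the members corresponding to lines; each such curve, being irreducible of degree $n$ with span forced to be maximal, is a rational normal curve of degree $n$, so the birational map $\PP^2 \dashrightarrow \Sigma$ is given by a linear subsystem of $|\mathcal{O}_{\PP^2}(n)|$. Comparing dimensions, $\dim |\mathcal{O}_{\PP^2}(n)| = \binom{n+2}{2}-1 = N-1$, so the subsystem is complete and $\Sigma = \nu_n(\PP^2)$, the uniqueness of the joining curve being inherited from the uniqueness of the line through two points of $\PP^2$. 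The hard part is precisely this last step: showing that the $2$-covering family is genuinely the line-web of a projective plane rather than a more degenerate configuration. This is the classical content of Bompiani's theorem, and I would follow the modern treatment of \cite{PR13} (via the second fundamental form and tangential projection) to carry it out.
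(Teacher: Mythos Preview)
The paper's own proof of this proposition is a one-line citation: ``See \cite[Theorem 2.2]{PR13}.'' No argument is given. Your proposal therefore goes well beyond the paper on the inequality part, while for the extremal characterization both you and the paper ultimately appeal to \cite{PR13}.

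Your argument for the bound $N\le\frac{1}{2}(n+1)(n+2)$ via osculating spaces is the classical Bompiani approach, and it is essentially correct. The only point that deserves a word of caution is the sentence ``since $(p,q)$ is a general pair, $p$ is a general point of $C$'': you have \emph{fixed} $p$ first and then let $q$ (hence $C=C_{p,q}$) vary, so it is not automatic that $p$ avoids the finite flex locus of each $C_{p,q}$. This is easily repaired by a standard incidence argument: in the correspondence $\{(p,C):p\in C\}$ the locus where $p$ is osculation-deficient on $C$ has codimension at least one, so for general $p$ the generic curve of the family through $p$ has $p$ non-special, and these curves still sweep out a dense subset of $\Sigma$. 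With that adjustment the inclusion $\Sigma\subset\mathrm{Osc}^n_p\Sigma$ and hence the bound follow as you say.

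For the equality case you correctly identify that the substantive content is showing the covering family behaves like the web of lines on $\PP^2$, and you explicitly defer this to \cite{PR13}. That is exactly what the paper does. One small remark: the clause ``with span forced to be maximal'' is not yet justified at the point where you invoke it; maximality of the osculating flag of $\Sigma$ does not by itself force each individual curve through $p$ to span a full $\PP^n$. This, too, is part of what comes out of the Pirio--Russo analysis rather than something you have established independently. Since you are already citing \cite{PR13} for the hard step, this is not a gap so much as a reminder not to overclaim before the citation.
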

\begin{proof}
See \cite[Theorem 2.2]{PR13}. 
\end{proof}

Let us now study the image of the $n{\mathrm{th}}$ cotangent map and the geometry the locus $\mathrm{exc}(\psi_n)$; this generalizes the analysis of the case $n=1$ that was carried out in \cite[Proposition 2.14]{Rou09}.

\begin{proposition} \label{prop:image-n-cotangent}
If $n \geq 2$ and the map $\psi_n \colon \PP(\Omega_X) \to \PP (H^0(X, \, S^n \Omega_X))$ is not generically finite, then its image $X_n \subset  \PP (H^0(X, \, S^n \Omega_X))$ is a non-degenerate, linearly normal surface $2$-covered by rational normal curves of degree $n$. Moreover, under the same  assumptions one has $h^0(X, \, S^n \Omega_X) \leq  \frac{1}{2}(n+1)(n+2)$, with equality if and only if  $X_n$ is projectively equivalent to $\nu_n(\mathbb{P}^2)$. \footnote{Some examples where $X_n$ is projectively equivalent to $\nu_n(\mathbb{P}^2)$ for all $n$ will be given in Subsection \ref{subsec:lambda-6-sharp}.}
\end{proposition}
\begin{proof}
By Proposition \ref{prop:dim-Dp} we have $2 \leq \dim X_n \leq 3$. If $\dim X_n=2$ then every fibre of $\psi_n \colon  \PP(\Omega_X) \to X_n$ has dimension $1$, i.e. every point $p \in X_n$ is an exceptional point for $\psi_n$. Hence $D_p$ has dimension $1$, whereas $\Sigma = \psi_n(\pi^{-1}(D_p))$ has dimension $2$ and is contained in the irreducible surface $X_n$. Therefore $\Sigma = X_n$ and so, by using Proposition \ref{prop:family-rational-normal} and the fact that $p \in X_n$ is arbitrary, we infer that $X_n$ is  $2$-covered by curves of degree $n$. The surface $X_n$ is non-degenerate and linearly normal, being the image of the morphism induced by a complete linear system in $\mathbb{P}(\Omega_X)$. The last statement now follows from Proposition \ref{prop:n-covered}.
\end{proof}

We can now state the main result of this section.

\begin{theorem} \label{thm:image-n-cotangent}Let $n \geq 2$ be an integer such that $S^n \Omega_X$ is globally generated. 
If $h^0(X, \, S^n \Omega_X) > \frac{1}{2}(n+1)(n+2)$ then $\psi_n$ is generically finite onto its image, namely, $\dim X_n = 3$, and we have
\begin{equation} \label{eq:deg-pluricotangent-map}
\deg \psi_n \leq \frac{n^3(c_1^2-c_2)}{h^0(X, \, S^n \Omega_X)-3}.
\end{equation}
In this case, $\mathrm{exc}(\psi_n)$ is a Zariski-closed, possibly empty subset of $ \PP (H^0(X, \, S^n \Omega_X))$ of dimension at most $1$. 
\end{theorem}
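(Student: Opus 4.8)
The plan is to prove the three assertions in turn, reducing generic finiteness to the classification of surfaces $2$-covered by curves, the degree bound to the classical lower bound on the degree of a non-degenerate variety, and the statement about $\mathrm{exc}(\psi_n)$ to upper semicontinuity of fibre dimension.

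First I would establish generic finiteness by contraposition. Suppose $\psi_n$ is \emph{not} generically finite. Then Proposition \ref{prop:image-n-cotangent} applies and shows that $X_n \subset \PP(H^0(X, \, S^n \Omega_X))$ is a non-degenerate surface that is $2$-covered by rational normal curves of degree $n$. Since $X_n$ spans $\PP^{N-1}$ with $N = h^0(X, \, S^n \Omega_X)$, Proposition \ref{prop:n-covered} forces $N \leq \frac{1}{2}(n+1)(n+2)$, contradicting the hypothesis. Hence $\psi_n$ is generically finite onto its image; as $\dim \PP(\Omega_X) = 3$, this is the same as $\dim X_n = 3$.

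Next I would derive the degree bound. Because $\psi_n$ is defined by the \emph{complete} linear system $|\mathcal{O}_{\PP(\Omega_X)}(n)|$, its image $X_n$ is an irreducible, non-degenerate $3$-fold in $\PP^{N-1}$. For any such variety the classical inequality $\deg \geq \mathrm{codim} + 1$ gives $\deg X_n \geq (N-1) - 3 + 1 = h^0(X, \, S^n \Omega_X) - 3$, a positive quantity since $n \geq 2$ forces $N > 6$. On the other hand, generic finiteness together with Proposition \ref{prop:c1^2-c2>0} yields $\deg X_n \cdot \deg \psi_n = n^3(c_1^2 - c_2)$; in particular $c_1^2 - c_2 > 0$. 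Dividing, $\deg \psi_n = n^3(c_1^2-c_2)/\deg X_n \leq n^3(c_1^2-c_2)/(h^0(X, \, S^n \Omega_X)-3)$, which is \eqref{eq:deg-pluricotangent-map}.

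Finally I would treat $\mathrm{exc}(\psi_n)$. By upper semicontinuity of fibre dimension, the locus $E := \{ z \in \PP(\Omega_X) : \dim_z \psi_n^{-1}(\psi_n(z)) \geq 1 \}$ is Zariski-closed; since $\psi_n$ is now generically finite, its generic fibre is finite, so $E$ is a proper closed subset of the irreducible $3$-fold $\PP(\Omega_X)$, whence $\dim E \leq 2$. By Proposition \ref{prop:dim-Dp} every fibre of $\psi_n$ has dimension at most $1$, so in view of Definition \ref{def:exceptional-point} one checks that $\mathrm{exc}(\psi_n) = \psi_n(E)$; as $\psi_n$ is a morphism of projective varieties this image is closed, establishing the Zariski-closedness. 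For the dimension bound I would argue that $\psi_n$ restricts to a surjection $E \to \mathrm{exc}(\psi_n)$ all of whose fibres have dimension $\geq 1$, since for exceptional $p$ the $1$-dimensional component of $\psi_n^{-1}(p)$ is entirely contained in $E$. The theorem on the dimension of fibres then gives $\dim \mathrm{exc}(\psi_n) \leq \dim E - 1 \leq 1$. The main obstacle I anticipate is precisely this last count: $E$ may be reducible, and a single irreducible component need not meet the whole $1$-dimensional fibre, so to make $\dim \psi_n(E) \leq \dim E - 1$ rigorous I would work component by component, noting that through a general (hence smooth) point $z$ of a component $E_\alpha$ the irreducible $1$-dimensional fibre component $F \ni z$ lies on the unique component of $E$ through $z$, namely $E_\alpha$, so that the map $E_\alpha \to \overline{\psi_n(E_\alpha)}$ has generically positive-dimensional fibres and the fibre-dimension theorem applies on each $E_\alpha$ separately.
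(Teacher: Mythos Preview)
Your proof is correct and follows essentially the same route as the paper: generic finiteness via Propositions \ref{prop:image-n-cotangent} and \ref{prop:n-covered}, the degree bound via the minimal-degree inequality for non-degenerate varieties combined with \eqref{eq:product-degrees}, and the statement on $\mathrm{exc}(\psi_n)$ via semicontinuity of fibre dimension together with properness. The only cosmetic difference is in the third part: the paper argues more tersely, invoking \cite[Corollaire 13.1.4]{EGAIV} for Zariski-closedness and observing directly that $\dim \psi_n^{-1}(\mathrm{exc}(\psi_n)) \leq 2$ forces $\dim \mathrm{exc}(\psi_n) \leq 1$, whereas you spell out the component-by-component fibre count; the underlying argument is the same.
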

\begin{proof}
The first statement is an immediate consequence of Proposition \ref{prop:image-n-cotangent}. Since $X_n$ is a non-degenerate threefold in $\PP (H^0(X, \, S^n \Omega_X))$, we have $\deg X_n \geq h^0(X, \, S^n \Omega_X)-3$ and so \eqref{eq:deg-pluricotangent-map} is a consequence of \eqref{eq:product-degrees}. Regarding the last statement, if $\psi_n$ is generically finite onto its image then we have $\dim \psi_n^{-1}(\mathrm{exc}(\psi_n)) \leq 2$, hence the exceptional locus $\mathrm{exc}(\psi_n)$ has dimension at most $1$. Such a locus is a (possibly empty) Zariski-closed subset of $\PP (H^0(X, \, S^n \Omega_X))$ because $\psi_n$ is a proper morphism, see \cite[Corollaire 13.1.4]{EGAIV}. 
\end{proof}

Note that, as explained in Example \ref{ex:Omega ample}, the exceptional locus $\mathrm{exc}(\psi_n)$ is empty if and only if $\Omega_X$ is ample.

\section{A formula for $\chi(X, S^n \Omega_X)$, with an application to pluri-cotangent maps} \label{sec:generically-finiteness}

This section is based on a  straightforward calculation, whose details are included because we could not find a suitable reference.
\begin{lemma} \label{lem:chi-S^n}
Let $X$ be a compact, complex surface. We have
\begin{equation} \label{eq:chi-S^n}
\chi(X, \, S^n \Omega_X) = \frac{1}{12}(n+1) \left( (2n^2-2n+1)c_1^2-(2n^2+4n-1)c_2 \right).
\end{equation} 
\end{lemma}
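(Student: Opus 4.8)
The plan is to compute $\chi(X,\, S^n\Omega_X)$ via the Hirzebruch--Riemann--Roch theorem applied to the rank-$(n+1)$ bundle $S^n\Omega_X$. First I would recall the general HRR formula for a rank-$r$ bundle $\mathscr{E}$ on a surface,
\begin{equation}
\chi(X,\, \mathscr{E}) = \operatorname{rank}(\mathscr{E})\cdot\chi(\mathcal{O}_X) + \tfrac{1}{2}\bigl(c_1(\mathscr{E})^2 - 2c_2(\mathscr{E})\bigr) + \tfrac{1}{2}c_1(\mathscr{E})\cdot c_1(T_X),
\end{equation}
and use Noether's formula $\chi(\mathcal{O}_X) = \tfrac{1}{12}(c_1^2 + c_2)$ to re-express the leading term in terms of the Chern numbers $c_1 = c_1(T_X)$ and $c_2 = c_2(T_X)$. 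So the whole computation reduces to finding the Chern classes $c_1(S^n\Omega_X)$ and $c_2(S^n\Omega_X)$ in terms of $c_1(\Omega_X) = -c_1$ and $c_2(\Omega_X) = c_2$.

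The key algebraic step is to evaluate these Chern classes using the splitting principle. Writing the Chern roots of $\Omega_X$ formally as $a$ and $b$, so that $a+b = c_1(\Omega_X) = -c_1$ and $ab = c_2(\Omega_X) = c_2$, the Chern roots of $S^n\Omega_X$ are exactly the $n+1$ quantities $\{ia + (n-i)b : i = 0,\dots,n\}$. From this I would compute
\begin{equation}
c_1(S^n\Omega_X) = \sum_{i=0}^{n}\bigl(ia+(n-i)b\bigr) = \tfrac{n(n+1)}{2}(a+b) = \tfrac{n(n+1)}{2}\,c_1(\Omega_X),
\end{equation}
and then $c_1(S^n\Omega_X)^2$ and $c_2(S^n\Omega_X) = \sum_{i<j}(ia+(n-i)b)(ja+(n-j)b)$ by expanding and collecting the power sums $\sum i$, $\sum i^2$, together with the identity $\sum_{i<j}r_ir_j = \tfrac{1}{2}\bigl((\sum r_i)^2 - \sum r_i^2\bigr)$. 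These reduce everything to closed-form polynomials in $n$ multiplying $(a+b)^2 = c_1^2$ and $ab = c_2$, using $\sum_{i=0}^n i = \tfrac{n(n+1)}{2}$ and $\sum_{i=0}^n i^2 = \tfrac{n(n+1)(2n+1)}{6}$.

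The main obstacle is purely bookkeeping: keeping the many polynomial-in-$n$ coefficients straight when expanding $c_1(S^n\Omega_X)^2 - 2c_2(S^n\Omega_X)$ and then substituting into HRR, so that everything collapses to the factor $(n+1)$ times the stated combination of $c_1^2$ and $c_2$. I would carry out the computation symbolically, express $c_1(\mathscr{E})^2 - 2c_2(\mathscr{E})$ as $\sum_i r_i^2$ where $r_i = ia + (n-i)b$ (since $c_1^2 - 2c_2$ is the second power sum of the Chern roots), which avoids computing $c_2(S^n\Omega_X)$ separately, and similarly $c_1(S^n\Omega_X)\cdot c_1(T_X) = -\tfrac{n(n+1)}{2}c_1^2$. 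Substituting these power-sum expressions together with $\operatorname{rank}(S^n\Omega_X) = n+1$ and Noether's formula, then simplifying, should yield \eqref{eq:chi-S^n} after factoring out $\tfrac{1}{12}(n+1)$; I would finish by sanity-checking the formula against the case $n=1$, where it must reduce to $\chi(X,\,\Omega_X) = \tfrac{1}{6}(c_1^2 - 5c_2)\cdot\tfrac{1}{\cdots}$, i.e. to the standard value $\chi(\Omega_X) = -\chi(\mathcal{O}_X) - \tfrac{1}{?}$ consistent with $h^{1,0}, h^{1,1}, h^{1,2}$.
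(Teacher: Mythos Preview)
Your proposal is correct and follows essentially the same route as the paper: both compute $c_1(S^n\Omega_X)$ and $c_2(S^n\Omega_X)$ via the splitting principle (the paper phrases it as a formal decomposition $\mathscr{E}=L_1\oplus L_2$, you use Chern roots $a,b$) and then substitute into Riemann--Roch for vector bundles on a surface. Your shortcut of computing $c_1(\mathscr{E})^2-2c_2(\mathscr{E})=\sum_i r_i^2$ directly is a minor streamlining over the paper's separate evaluation of $c_2(S^n\mathscr{E})$, but the argument is otherwise identical; note only that your closing sanity check for $n=1$ is left garbled and should read $\chi(X,\Omega_X)=\tfrac{1}{6}(c_1^2-5c_2)$.
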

\begin{proof}
This is a standard application of the splitting principle, as stated in \cite[p. 28]{Fried98}: every universal formula on Chern classes which holds for direct sum of line bundles holds in general. Let $\mathscr{E}=L_1 \oplus L_2$ be a decomposable rank $2$ vector bundle on $X$; then 
\begin{equation} \label{eq:c1V-c2V}
c_1(\mathscr{E})=c_1(L_1)+c_1(L_2), \quad c_2(\mathscr{E})=c_1(L_1)c_1(L_2).  
\end{equation}
We can compute $c_1(S^n\mathscr{E})$ as follows:
\begin{equation} \label{eq:c1-SnV}
\begin{split}
c_1(S^n\mathscr{E}) & = c_1 \left(S^n(L_1 \oplus L_2) \right) =c_1 \left( \bigoplus_{i=0}^n L_1^i \otimes L_2 ^{n-i} \right) \\ 
& = \sum_{i=0}^n \left( ic_1(L_1) + (n-i)c_1(L_2) \right) = \frac{n(n+1)}{2} \left(c_1(L_1) + c_1(L_2) \right) \\
& = \frac{n(n+1)}{2} c_1(\mathscr{E}). 
\end{split}
\end{equation}
Let us now compute $c_2(S^n\mathscr{E})$. We have
\begin{equation} \label{eq:c2-SnV-1}
\begin{split}
c_2(S^n\mathscr{E}) & = c_2 \left(S^n(L_1 \oplus L_2) \right) = c_2 \left( \bigoplus_{i=0}^n L_1^i \otimes L_2 ^{n-i} \right) \\ 
& = \sum_{0 \leq i < j \leq n} c_1(L_1^i \otimes L_2^{n-i}) c_1(L_1^j \otimes L_2^{n-j}) \\
& = \sum_{0 \leq i < j \leq n} \left( ic_1(L_1) + (n-i)c_1(L_2)  \right) \left( jc_1(L_1) + (n-j)c_1(L_2)  \right) \\
& = A c_1(L_1)^2 + B c_1(L_1)c_1(L_2) + C c_1(L_2)^2,
\end{split}
\end{equation}
where
\begin{equation} \label{eq:c2-SnV-ABC}
\begin{split}
A& =\sum_{0 \leq i < j \leq n} ij, \\ B& =\sum_{0 \leq i < j \leq n} \left( i(n-j)+(n-i)j \right), \\ C&=\sum_{0 \leq i < j \leq n} (n-i)(n-j).
\end{split}
\end{equation}
The quantities in \eqref{eq:c2-SnV-ABC} can be calculated by means of the standard formulas for the sum of integers and squares, obtaining
\begin{equation} \label{eq:c2-SnV-ABC-computed}
\begin{split}
A=C=& \frac{1}{24}(n-1)n(n+1)(3n+2) \\
B& = \frac{1}{12} n(n+1)(3n^2+n+2).
\end{split}
\end{equation}
Plugging \eqref{eq:c2-SnV-ABC-computed} into \eqref{eq:c2-SnV-1}, and taking into account \eqref{eq:c1V-c2V}, we get
\begin{equation} \label{eq:c2-SnV-2}
c_2(S^n\mathscr{E})=\frac{1}{24}(n-1)n(n+1)(3n+2)c_1(\mathscr{E})^2+\frac{1}{6}n(n+1)(n+2)c_2(\mathscr{E}).
\end{equation}
Now, the Riemann-Roch theorem for vector bundles on surfaces, see \cite[p. 31]{Fried98}, implies 
\begin{equation} \label{eq:chi-SnV}
\chi(X, \, S^n\mathscr{E})=\frac{c_1(S^n\mathscr{E}) (c_1(S^n\mathscr{E})-K_X)}{2}-c_2(S^n\mathscr{E})+(n+1)\frac{c_1^2 + c_2}{12}.
\end{equation}
Setting $\mathscr{E}=\Omega_X$ in \eqref{eq:chi-SnV} and using \eqref{eq:c1-SnV} and \eqref{eq:c2-SnV-2}, by standard computations we obtain \eqref{eq:chi-S^n}.
\end{proof}

\begin{corollary} \label{cor:h^0(S^n)}
Let $X$ be surface of general type with ample canonical class. Then for all $n \geq 3$ we have
\begin{equation} \label{eq:h^0(S^n)}
h^0(X, \, S^n \Omega_X) \geq  \frac{1}{12}(n+1) \left( (2n^2-2n+1)c_1^2-(2n^2+4n-1)c_2 \right).
\end{equation}
\end{corollary}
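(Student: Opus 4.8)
The plan is essentially to observe that no new work is required: the right-hand side of \eqref{eq:h^0(S^n)} is \emph{verbatim} the expression for $\chi(X, \, S^n\Omega_X)$ established in Lemma \ref{lem:chi-S^n}. Consequently the corollary reduces entirely to the inequality $h^0(X, \, S^n\Omega_X) \geq \chi(X, \, S^n\Omega_X)$ valid for $n \geq 3$.

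First I would invoke Corollary \ref{cor:h0-and-chi}, which asserts precisely that, under the present hypotheses ($X$ of general type with ample $K_X$) and for all $n \geq 3$, one has $h^0(X, \, S^n\Omega_X) \geq \chi(X, \, S^n\Omega_X)$. This is the only substantive input, and it in turn rests on Bogomolov's vanishing \eqref{eq:Bogomolov-vanishing-2}, namely $H^2(X, \, S^n\Omega_X)=0$ for $n \geq 3$: writing $\chi = h^0 - h^1 + h^2$, the vanishing kills the last term, so $\chi = h^0 - h^1 \leq h^0$ since $h^1 \geq 0$.

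Then I would simply substitute the closed formula from Lemma \ref{lem:chi-S^n},
\begin{equation*}
\chi(X, \, S^n \Omega_X) = \frac{1}{12}(n+1) \left( (2n^2-2n+1)c_1^2-(2n^2+4n-1)c_2 \right),
\end{equation*}
into the inequality of Corollary \ref{cor:h0-and-chi}, and the desired bound \eqref{eq:h^0(S^n)} follows immediately.

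There is no genuine obstacle here: both ingredients are already in place in the excerpt, and combining them is purely formal. The only point to verify is the routine bookkeeping that the two cited formulas match termwise, which they do by construction. Thus the corollary is an immediate consequence of Corollary \ref{cor:h0-and-chi} and Lemma \ref{lem:chi-S^n}.
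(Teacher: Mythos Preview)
Your proposal is correct and follows exactly the same approach as the paper's own proof, which simply reads ``Combine Lemma~\ref{lem:chi-S^n} with Corollary~\ref{cor:h0-and-chi}.'' There is nothing to add.
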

\begin{proof}
Combine Lemma \ref{lem:chi-S^n} with Corollary \ref{cor:h0-and-chi}.
\end{proof}

As a consequence of the previous calculations we can state the following apparently simple result, which, however, we have not been able to find  in the literature. 
\begin{theorem} \label{thm:generically-finite-1}
Let $X$ be a compact, complex surface and let $n \geq 3$  such that $S^n \Omega_X$ is globally generated and  $\chi(X, \,S^n \Omega_X)  \geq 0$. Then the pluri-cotangent map  $\psi_n \colon \mathbb{P}(\Omega_X) \to \mathbb{P}(H^0(X, \, S^n\Omega_X))$ is generically finite onto its image. 
\end{theorem}
\begin{proof}
If $\chi(X, \,S^n \Omega_X)  \geq 0$, by using formula \eqref{eq:chi-S^n} we obtain
\begin{equation} \label{eq:chi>0}
c_1^2 \geq \frac{2n^2+4n-1}{2n^2-2n+1} c_2 = \left( 1+ \frac{6n-2}{2n^2-2n+1}\right) c_2 > c_2,
\end{equation}
that is $c_1^2-c_2 >0$. Now apply  Proposition \ref{prop:c1^2-c2>0}.
\end{proof}

\section{Examples and counterexamples} \label{sec:examples}

\subsection{Example: symmetric complete intersections in abelian fourfolds of product type} \label{subsec:complete-intersections}

Let us consider an abelian fourfold of the form $A \times E$, where $A$ is an abelian threefold and $E$ is an elliptic curve. Let  $M$ be a polarization on $A \times E$ and let $x \in A$ be a point which is not $2$-torsion. Up to replacing the polarization $M$ on $A \times E$ with a suitable positive multiple, by using parameter counting and  Bertini-type arguments we can find two smooth hypersurfaces $Y_1, \, Y_2 \in |M|$ such that 
\begin{itemize}
\item $Y_1$ and $Y_2$ are both symmetric, i.e. invariant with respect to the involution $-1 \colon A \times E \to A \times E$;
\item $Y_1$ and $Y_2$ both contain the elliptic curve $\{x\} \times E$;
\item $Y_1$ and $Y_2$ do not contain any $2$-torsion points of $A \times E$; 
\item the intersection $Y=Y_1 \cap Y_2$ is smooth.
\end{itemize}
The conditions above imply that $Y$ is a smooth, minimal surface on which $-1$ acts freely; then the quotient $f \colon Y \to X$ provide a smooth surface $X$, containing an elliptic curve $E'$ isomorphic to $E$.  

\begin{proposition} \label{prop:abelian-fourfold}
$X$ is a minimal surface of general type with 
\begin{equation} \label{eq:c_1-c2(X)-involution}
c_1(X)^2=2M^4, \quad c_2(X)=\frac{3}{2}M^4,
\end{equation}
hence $c_1(X)^2-c_2(X)=\frac{1}{2}M^4>0$. Moreover $H^0(X, \, \Omega_X)=0$, in particular $\Omega_X$ is not globally generated. Finally, $S^n\Omega_X$ is globally generated for all even $n$, hence $\Omega_X$ is strongly semi-ample. However, $\Omega_X$ is not ample.
\end{proposition}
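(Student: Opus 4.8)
The plan is to transfer every computation to the étale double cover $f \colon Y \to X$, where $Y = Y_1 \cap Y_2$ is a smooth complete intersection surface in the abelian fourfold $V := A \times E$, and then descend the relevant data to $X$ by means of the free $\mathbb{Z}_2$-action generated by $\iota := -1$.

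First I would compute the Chern numbers of $Y$. Since $T_V$ is trivial, the normal bundle sequence $0 \to T_Y \to T_V|_Y \to N_{Y/V} \to 0$ together with $N_{Y/V} = \mathcal{O}_Y(Y_1) \oplus \mathcal{O}_Y(Y_2)$ (both summands in the class $M|_Y$) gives $c(T_Y) = (1+M)^{-2}$ on $Y$, whence $K_Y = 2M|_Y$, $K_Y^2 = 4M^4$ and $c_2(Y) = e(Y) = 3M^4$ (using that $Y$ has class $M^2$ in $V$). Because $f$ is an unramified double cover we have $f^*K_X = K_Y$ and $e(Y) = 2\,e(X)$, so $K_X^2 = \tfrac12 K_Y^2 = 2M^4$ and $c_2(X) = \tfrac12 e(Y) = \tfrac32 M^4$, giving \eqref{eq:c_1-c2(X)-involution} and $c_1(X)^2 - c_2(X) = \tfrac12 M^4 > 0$. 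Since $M|_Y$ is ample, $K_Y$ is ample, and ampleness descends along the finite map $f$, so $K_X$ is ample and $X$ is of general type.

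Next, to compute $H^0(X, \Omega_X)$ I would use that it is the invariant part $H^0(Y, \Omega_Y)^{+}$ for $\iota$. By the Lefschetz hyperplane theorem applied iteratively to the ample complete intersection surface $Y \subset V$, the restriction $H^{1,0}(V) \to H^{1,0}(Y)$ is an isomorphism, so $H^0(Y, \Omega_Y)$ is spanned by the restrictions of the four translation-invariant $1$-forms $\omega_1, \dots, \omega_4$ on $V$. Each $\omega_i$ satisfies $\iota^*\omega_i = -\omega_i$, and restriction to the $\iota$-invariant surface $Y$ commutes with $\iota$; hence the whole space lies in the $(-1)$-eigenspace and its invariant part vanishes, so $H^0(X, \Omega_X) = 0$. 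In particular $\Omega_X$ cannot be globally generated, a globally generated bundle with no global sections being zero. For the global generation of $S^n\Omega_X$ with $n$ even, the key observation is that the conormal sequence produces a surjection $\mathcal{O}_Y^{\oplus 4} = \Omega_V|_Y \twoheadrightarrow \Omega_Y$, so $S^n\Omega_Y$ is globally generated by the degree-$n$ monomials in the $\omega_i$. Under $\iota$ these monomials scale by $(-1)^n$, so for even $n$ they are invariant and, via the identification $H^0(X, S^n\Omega_X) = H^0(Y, S^n\Omega_Y)^{+}$, descend to sections of $S^n\Omega_X$. I would then argue that these descended sections generate $S^n\Omega_X$ at every $x \in X$: choosing $y \in f^{-1}(x)$ and using that $f$ is étale, so $(S^n\Omega_X)_x = (S^n\Omega_Y)_y$, generation upstairs forces generation downstairs. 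Thus $S^n\Omega_X$ is globally generated for all even $n$ and $\Omega_X$ is strongly semi-ample. Finally $\Omega_X$ is not ample, since $X$ contains the elliptic curve $E'$ (the common image of $\{x\}\times E$ and $\{-x\}\times E$) and, as in Example \ref{ex:Omega ample}, a surface with ample cotangent bundle contains no elliptic curves.

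The main obstacle I anticipate is the descent argument for global generation: one must verify that the invariant monomial sections, a priori only generating the pullback bundle $f^*S^n\Omega_X$ on $Y$, are genuinely pulled back from $X$, and that the étale property lets one read off fibrewise generation of $S^n\Omega_X$ itself. The eigenspace bookkeeping for the $\mathbb{Z}_2$-action—both in the vanishing $H^0(X,\Omega_X)=0$ and in the generation step—is precisely where the parity of $n$ enters, and it is the one place the argument is not purely formal.
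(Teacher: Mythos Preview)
Your proof is correct and follows essentially the same route as the paper: the normal bundle sequence on $Y$ to compute Chern numbers, the Lefschetz theorem to identify $H^0(Y,\Omega_Y)$ with the restriction of the (anti-invariant) forms on $A\times E$, and the observation that for even $n$ the monomial sections are $\iota$-invariant and generate $S^n\Omega_Y$, hence descend and generate $S^n\Omega_X$. Your extra care about the \'etale descent step for global generation is well placed but not a genuine obstacle; the paper treats it in the same way.
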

\begin{proof}
Using the short exact sequence of tangent bundles
\begin{equation}
0 \to T_Y \to T_{A \times E}|_Y \to N_Y \to 0,
\end{equation}
and recalling that $ T_{A \times E}=\mathcal{O}_{A \times E}^{\oplus 4}$ and $N_Y=\mathcal{O}_Y(M)^{\oplus 2}$, we get the equality of total Chern classes
\begin{equation} \label{eq:total-Chern-class}
c(T_Y) = c(\mathcal{O}_Y(M))^{-2} 
\end{equation}
that in turn yields $c_1(Y)=\mathcal{O}_Y(-2M)$ and $c_2(Y)= 3M^4$. Since $f \colon Y \to X$ is an \'{e}tale double cover, we deduce \eqref{eq:c_1-c2(X)-involution}. Moreover, we have $\Omega_Y = f^*\Omega_X$, hence the vector space $H^0(X, \, \Omega_X)$ is isomorphic to the involution-invariant subspace of $H^0(Y, \, \Omega_Y)$. Now, by Lefschetz theorem for Hodge groups, see \cite[Example 3.1.24]{Laz04}, the global holomorphic $1$-forms on $Y$ are precisely the restrictions of those on $A \times E$, so none of them is invariant and we get $H^0(X, \, \Omega_X)=0$. On the other hand, if $n$ is even then all global sections of $S^n \Omega_{A \times E}$ are invariant and thus, when restricted to $Y$, they descend to $X$. These sections generate $S^n \Omega_{A \times E}$, hence they also generate $S^n \Omega_Y$ and $S^n \Omega_X$. Finally, $\Omega_X$ is not ample because $X$ contains the elliptic curve $E'$, see the discussion at the end of Example \ref{ex:Omega ample}. 
\end{proof}

\begin{proposition} \label{prop:cotangent_maps_example}
 Let $X$ be a surface as above. Then the pluri-cotangent map $\psi_n \colon \mathbb{P}(\Omega_X) \to \mathbb{P}(H^0(X, \, S^n\Omega_X))$ is generically finite onto its image for all even $n \geq 2$. 
\end{proposition}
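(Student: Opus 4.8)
The plan is to reduce the problem to the \'etale double cover $f \colon Y \to X$, where the cotangent bundle is far better behaved, and then descend. Indeed $\Omega_Y$ is a quotient of $\Omega_{A \times E}|_Y = \mathcal{O}_Y^{\oplus 4}$, hence globally generated, and by the Lefschetz hyperplane theorem $q(Y) = h^0(Y, \, \Omega_Y) = 4 > 3$. Therefore Example \ref{ex:Omega_generated} applies to $Y$ and gives that the first cotangent map $\psi_1^Y \colon \PP(\Omega_Y) \to \PP(H^0(Y, \, \Omega_Y)) \simeq \PP^3$ is generically finite onto its image; since source and target are both irreducible and three-dimensional, $\psi_1^Y$ is in fact a surjection onto all of $\PP^3$.

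The key step is to produce enough global sections of $S^n \Omega_X$. Set $V := H^0(A \times E, \, S^n \Omega_{A \times E}) = S^n H^0(A \times E, \, \Omega_{A \times E}) \simeq S^n \CC^4$, of dimension $\binom{n+3}{3}$. Under the identification $H^0(Y, \, S^n\Omega_Y) = H^0(\PP(\Omega_Y), \, \mathcal{O}_{\PP(\Omega_Y)}(n))$, restriction from $A \times E$ induces a linear map $V \to H^0(Y, \, S^n \Omega_Y)$, and I claim it is injective. A section $F \in V$ is a degree-$n$ form in the four coordinate $1$-forms; regarding $\PP(\Omega_Y)$ as a closed subvariety of $Y \times \PP^3$ (via the surjection $\mathcal{O}_Y^{\oplus 4} \to \Omega_Y$), the restriction of $F$ vanishes precisely when $F$ vanishes on the image of $\PP(\Omega_Y)$ in $\PP^3$ under the second projection. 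But this projection is exactly $\psi_1^Y$, whose image is all of $\PP^3$ by the previous paragraph; hence a homogeneous form vanishing there is zero, proving injectivity.

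It remains to descend to $X$ and count. Since $f$ is \'etale with Galois involution $\iota$ induced by $-1$, we have $H^0(X, \, S^n \Omega_X) = H^0(Y, \, S^n \Omega_Y)^{\iota}$. Now $\iota$ acts on the four coordinate $1$-forms by $-1$ and trivially on the (constant) $\PP^3$-directions, so it acts on $V$ by $(-1)^n$; for even $n$ the whole image of $V$ therefore lies in the invariant part. Consequently
\[
h^0(X, \, S^n \Omega_X) \geq \dim V = \binom{n+3}{3} > \frac{1}{2}(n+1)(n+2),
\]
the last inequality holding for every $n \geq 1$. As $S^n \Omega_X$ is globally generated for even $n$ by Proposition \ref{prop:abelian-fourfold}, Theorem \ref{thm:image-n-cotangent} applies and yields that $\psi_n$ is generically finite onto its image for all even $n \geq 2$.

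The main obstacle is precisely that $\Omega_X$ itself is not globally generated (indeed $H^0(X, \, \Omega_X) = 0$), so no criterion phrased in terms of $\psi_1$ can be applied to $X$ directly; the whole point is to transfer positivity from the cover $Y$, where $\Omega_Y$ is globally generated and $q(Y) = 4 > 3$, while carefully tracking $\iota$-invariance so that the abundant sections coming from the abelian fourfold actually descend to $X$ when $n$ is even.
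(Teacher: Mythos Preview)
Your proof is correct and follows the same overall strategy as the paper: bound $h^0(X,\,S^n\Omega_X)$ from below by the dimension $\binom{n+3}{3}$ of $S^n H^0(A\times E,\,\Omega_{A\times E})$, use that for even $n$ these sections are $\iota$-invariant and hence descend to $X$, and then invoke Theorem~\ref{thm:image-n-cotangent}.

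The one genuine difference lies in how you justify the injectivity of the restriction map $H^0(A\times E,\,S^n\Omega_{A\times E}) \to H^0(Y,\,S^n\Omega_Y)$. The paper simply cites \cite[Proposition 13]{Deb05}. You instead argue geometrically: since $q(Y)=4>3$, the first cotangent map $\psi_1^Y$ is generically finite by \cite[Proposition~2.14]{Rou09}, hence surjective onto $\PP^3$, and therefore no non-zero degree-$n$ form on $\PP^3$ can pull back to zero on $\PP(\Omega_Y)$. This is a pleasant, self-contained argument that stays entirely within the machinery already developed in the paper (Example~\ref{ex:Omega_generated} and diagram~\eqref{dia:symmetrization}), at the modest cost of invoking the $q>3$ criterion for $Y$. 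The citation to \cite{Deb05} is more direct but black-boxes the geometry; your route makes transparent \emph{why} the restriction is injective in this particular situation.
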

\begin{proof}
From $\Omega_Y = f^*\Omega_X$ we infer $S^n \Omega_Y=f^*(S^n \Omega_X)$ for all $n\geq 1$. This implies that $H^0(X, S^n \Omega_X)$ is isomorphic to the involution-invariant subspace $H^0(Y, \, S^n \Omega_Y)^+ \subseteq H^0(Y, \, S^n\Omega_Y)$. On the other hand, \cite[Proposition 13]{Deb05} implies that the restriction map
\begin{equation}
H^0(A \times E, \, S^n \Omega_{A \times E}) \to H^0(Y, \, S^n \Omega_Y)
\end{equation}
is injective for all $n$; moreover, if $n$ is even then every global section of $S^n \Omega_{A \times E}$ is invariant, and so its restriction belongs to $H^0(Y, \, S^n \Omega_Y)^+$. Summing up, for all even $n \geq 2$ we have 
\begin{equation}
\begin{split}
h^0(X, \, S^n \Omega_X)& = \dim H^0(Y, \, S^n \Omega_Y)^+ \geq \dim H^0(A \times E, \, S^n \Omega_{A \times E}) \\ 
& = \frac{1}{6}(n+1)(n+2)(n+3) > \frac{1}{2}(n+1)(n+2).
\end{split}
\end{equation}
The claim now follows from Theorem \ref{thm:image-n-cotangent}.
\end{proof}

\subsection{Example: some product-quotient surfaces} \label{subsec:product-quotient}

We start by considering a hyperelliptic curve $C$ of genus $3$, endowed with an additional action of the cyclic group  $G=\mathbb{Z}_2$ as follows. The curve $C$ has affine equation of the form
\begin{equation}
y^2=a_8x^8+a_6x^6+a_4x^4+a_2x^2+a_0,
\end{equation}  
where the coefficients $a_i$ are such that the zeros of the polynomial at the right side are distinct. If $g$ is the generator of $G$, we define the action of $G$ on $C$ as 
\begin{equation}
g(x, \, y)=(-x, \, y). 
\end{equation}
One checks that $g$ has the four fixed points 
\begin{equation}
(0, \, \sqrt{a_0}), \quad (0, \, -\sqrt{a_0}), \quad (\infty, \, \sqrt{a_8}), \quad (\infty, \, -\sqrt{a_8}),
\end{equation}
hence the quotient map $C \to C/G$ is branched at four points and, by the Hurwitz formula, the curve $C/G$ has genus $1$. 


\begin{lemma} \label{lem:G-action on omega_C}
Let $H^0(C, \, \omega_C) =V^+ \oplus V^-$ be the decomposition of $H^0(C, \, \omega_C)$ into the $G$-invariant subspace $V^+$ and the $G$-antiinvariant subspace $V^-$. Then there exists a basis $\{\xi_1, \, \xi_2, \, \xi_3 \}$ of $H^0(C, \, \omega_C)$ such that
\begin{equation}
V^+=\langle \xi_1 \rangle, \quad V^-=\langle \xi_2, \, \xi_3 \rangle
\end{equation}
and the three canonical divisors $\operatorname{div}(\xi_1)$, $\operatorname{div}(\xi_2)$, $\operatorname{div}(\xi_3)$ have pairwise disjoint supports.
\end{lemma}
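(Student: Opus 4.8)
The plan is to work entirely in the affine coordinates $(x,y)$, using the classical basis of holomorphic one-forms on a hyperelliptic curve and then exploiting the freedom available inside the two-dimensional eigenspace $V^-$. First I would recall that for the genus-$3$ hyperelliptic curve $y^2=f(x)$, with $f$ the degree-$8$ polynomial above, the space $H^0(C,\omega_C)$ has the standard basis
\begin{equation}
\omega_1=\frac{dx}{y},\qquad \omega_2=\frac{x\,dx}{y},\qquad \omega_3=\frac{x^2\,dx}{y}.
\end{equation}
Since $g^*x=-x$, $g^*y=y$ and hence $g^*(dx)=-dx$, a one-line computation gives $g^*\omega_1=-\omega_1$, $g^*\omega_2=\omega_2$, $g^*\omega_3=-\omega_3$. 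Therefore $V^+=\langle\omega_2\rangle$ is one-dimensional and $V^-=\langle\omega_1,\omega_3\rangle$ is two-dimensional (consistently with $\dim V^+=1=g(C/G)$), and this already pins down $\xi_1=\omega_2=x\,dx/y$ up to a scalar.

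Next I would compute the relevant divisors explicitly. Writing $P_0^\pm=(0,\pm\sqrt{a_0})$ for the two points over $x=0$ and $\infty_\pm=(\infty,\pm\sqrt{a_8})$ for the two points over $x=\infty$ (these being exactly the four fixed points of $g$), the standard local computations give $\operatorname{div}(x)=P_0^++P_0^--\infty_+-\infty_-$ and, because $\deg f=2g+2$, a double zero of $dx/y$ at each point at infinity, i.e. $\operatorname{div}(dx/y)=2\infty_++2\infty_-$. Hence
\begin{equation}
\operatorname{div}(\xi_1)=\operatorname{div}(x)+\operatorname{div}(dx/y)=P_0^++P_0^-+\infty_++\infty_-,
\end{equation}
supported over $x=0$ and $x=\infty$. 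For a general element of $V^-$, namely $\eta=\frac{(\alpha+\beta x^2)\,dx}{y}$ with $\beta\neq0$, the factor $\alpha+\beta x^2=\beta(x^2-r^2)$ contributes a pole divisor $2\infty_++2\infty_-$ that cancels the zeros of $dx/y$, so that $\operatorname{div}(\eta)$ equals the four points lying over $x=\pm r$, where $r^2=-\alpha/\beta$. Thus the divisors of the members of $V^-$ form a moving, base-point-free family parametrised by $r^2\in\mathbb{P}^1$.

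Finally I would make the choice: pick two distinct nonzero finite values $a\neq b$ and set $\xi_2=\frac{(x^2-a)\,dx}{y}$ and $\xi_3=\frac{(x^2-b)\,dx}{y}$. Since $a\neq b$ these are linearly independent, hence a basis of $V^-$, while $\operatorname{div}(\xi_2)$ is supported over $x=\pm\sqrt{a}$ and $\operatorname{div}(\xi_3)$ over $x=\pm\sqrt{b}$. As $a,b$ are distinct and nonzero, the six base values $0,\infty,\pm\sqrt{a},\pm\sqrt{b}$ are pairwise distinct points of $\mathbb{P}^1$, so the three supports are pairwise disjoint, as required.

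The point to watch is that $\xi_1$ is forced up to scalar by the one-dimensionality of $V^+$, so its divisor is rigid and cannot be moved off $\{P_0^\pm,\infty_\pm\}$; all the flexibility must therefore come from $V^-$. The main—though mild—obstacle is thus to verify that the divisors of $V^-$ genuinely vary and can simultaneously avoid this fixed support and one another, which is precisely the base-point-freeness recorded above and guarantees the existence of admissible $a,b$.
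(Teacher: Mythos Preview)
Your proof is correct and follows the same approach as the paper: identify the standard hyperelliptic basis $\omega_1,\omega_2,\omega_3$, observe that $V^+=\langle\omega_2\rangle$ and $V^-=\langle\omega_1,\omega_3\rangle$, set $\xi_1=\omega_2$, and then choose $\xi_2,\xi_3$ inside $V^-$. The paper is more terse---it simply says to take $\xi_2,\xi_3$ as two general elements of $\langle\omega_1,\omega_3\rangle$---whereas you supply the explicit divisor computations that justify why such a generic choice works.
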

\begin{proof}
The vector space $H^0(C, \, \omega_C)$ is generated by the holomorphic $1$-forms which, in affine coordinates, can be written as 
\begin{equation}
\omega_1 := \frac{dx}{y}, \quad \omega_2:=x \frac{dx}{y}, \quad \omega_3:=x^2 \frac{dx}{y}. 
\end{equation}
Note that $\omega_2$ is $G$-invariant, whereas $\omega_1$ and $\omega_3$ are $G$-antiinvariant. Now set $\xi_1:= \omega_2$ and take as $\xi_2$ and $\xi_3$ two general elements in the $G$-antiinvariant subspace $\langle\omega_1, \, \omega_3 \rangle$. 
\end{proof}
Let us now consider another curve $F$ with a $G$-action.
\begin{lemma} \label{lem:G-action on omega_F}
Let $g = 2k+1 \geq 5$ be an odd integer. Then there exists a curve $F$ of genus $g$, endowed with a free $G$-action having the following property. Denoting by $H^0(F, \, \omega_F)=W^+ \oplus W^-$ the decomposition of $H^0(F, \, \omega_F)$ into $G$-invariant and $G$-antiinvariant subspace, we can find a basis $\{\tau_1, \ldots , \tau_g \}$ of $H^0(F, \, \omega_F)$ such that 
\begin{equation}
W^+= \langle \tau_1, \ldots, \tau_{k+1} \rangle, \quad W^-= \langle \tau_{k+2}, \ldots, \tau_{g} \rangle
\end{equation}
and the $g$ canonical divisors $\operatorname{div}(\tau_1), \ldots, \operatorname{div}(\tau_g)$ have pairwise disjoint supports.
\end{lemma}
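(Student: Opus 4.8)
The plan is to realize $F$ as an étale double cover of a suitable curve of genus $k+1$. Concretely, I would fix a non-hyperelliptic smooth curve $B$ of genus $h:=k+1$ (such curves exist because $g=2k+1\geq 5$ forces $h\geq 3$) and a non-zero $2$-torsion line bundle $\eta\in\operatorname{Pic}^0(B)[2]$. The class $\eta$ determines a connected étale double cover $\pi\colon F\to B$ whose deck transformation generates a free $G=\mathbb{Z}_2$-action; by the Hurwitz formula one gets $g(F)=2h-1=2k+1=g$, as required. Since $\pi$ is étale we have $\omega_F=\pi^*\omega_B$ and $\pi_*\mathcal{O}_F=\mathcal{O}_B\oplus\eta$, so that $\pi_*\omega_F=\omega_B\oplus(\omega_B\otimes\eta)$, the two summands being respectively the $G$-invariant and the $G$-antiinvariant part.

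Taking global sections, this identifies $W^+\cong H^0(B,\,\omega_B)$ and $W^-\cong H^0(B,\,\omega_B\otimes\eta)$. The first has dimension $h=k+1$; for the second, Riemann--Roch together with $h^1(\omega_B\otimes\eta)=h^0(\eta^{-1})=h^0(\eta)=0$ (as $\eta$ is non-trivial of degree $0$) gives $\dim W^-=h-1=k$, matching the claimed splitting. The crucial geometric observation is that, because the $G$-action is free, every invariant or antiinvariant holomorphic $1$-form on $F$ has a $G$-invariant zero divisor and hence is the pullback of a divisor on $B$: for $\tau\in W^+$ corresponding to $s\in H^0(B,\omega_B)$ one has $\operatorname{div}(\tau)=\pi^{-1}(\operatorname{div}(s))$, and likewise $\operatorname{div}(\tau)=\pi^{-1}(\operatorname{div}(t))$ with $\operatorname{div}(t)\in|\omega_B\otimes\eta|$ for $\tau\in W^-$. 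Since $\pi$ is surjective, two such divisors are disjoint on $F$ if and only if the corresponding divisors are disjoint on $B$. This reduces the problem to producing bases of $H^0(B,\omega_B)$ and of $H^0(B,\omega_B\otimes\eta)$ whose $2k+1$ zero divisors have pairwise disjoint supports on $B$.

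The key step is to check that both linear systems $|\omega_B|$ and $|\omega_B\otimes\eta|$ are base-point-free. For $|\omega_B|$ this is classical, a canonical system on a curve of genus $\geq 2$ having no base points. For $|\omega_B\otimes\eta|$ a point $p\in B$ is a base point if and only if $h^0(\omega_B\otimes\eta-p)=h^0(\omega_B\otimes\eta)$, which by Riemann--Roch is equivalent to $h^0(\eta(p))=1$, i.e. to $\eta\cong\mathcal{O}_B(q-p)$ for some $q\in B$. As $\eta$ is a non-zero $2$-torsion class this forces $q\neq p$ and $2q\sim 2p$; but on a non-hyperelliptic curve $h^0(\mathcal{O}_B(2p))=1$, so $2q\sim 2p$ implies $q=p$, a contradiction. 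Hence $|\omega_B\otimes\eta|$ has no base points, and this is precisely where the hypothesis $g\geq 5$ (equivalently $h\geq 3$, so that non-hyperelliptic curves of genus $h$ exist) is used. I expect this base-point-freeness statement to be the main technical point of the proof.

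Finally, I would extract the two bases by a standard general-position argument. Both systems have positive dimension, since $\dim|\omega_B|=k\geq 1$ and $\dim|\omega_B\otimes\eta|=k-1\geq 1$, so I can choose the sections one at a time: at each stage the sections whose divisor meets the finitely many points already used form a finite union of hyperplanes in $\mathbb{P}(H^0(\cdot))$ (by base-point-freeness), while the sections failing to be linearly independent from the previously chosen ones form one further hyperplane. Avoiding these proper closed conditions, which is possible because the relevant projective spaces are irreducible of positive dimension, yields a basis $\{\tau_1,\dots,\tau_{k+1}\}$ of $W^+$ and a basis $\{\tau_{k+2},\dots,\tau_{g}\}$ of $W^-$ whose $g$ zero divisors have pairwise disjoint supports, completing the proof.
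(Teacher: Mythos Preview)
Your proof is correct and follows essentially the same route as the paper: construct $F$ as an \'etale double cover of a curve $B$ of genus $k+1$ via a non-trivial $2$-torsion class, identify $W^{\pm}$ with $H^0(B,\omega_B)$ and $H^0(B,\omega_B\otimes\eta)$, show $|\omega_B\otimes\eta|$ is base-point-free (the paper also notes the alternative of allowing $B$ hyperelliptic with $\eta$ not a difference of Weierstrass points, but your non-hyperelliptic choice already suffices), and then pick the bases recursively by avoiding finitely many points.
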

\begin{proof}
Let $D$ be a curve of genus $k+1$ and let $\mathscr{L}$ be a non-trivial line bundle on $D$ such that $\mathscr{L}^2=\mathcal{O}_D$. Then there exists an \'{e}tale double cover $f \colon F \to D$, with $F$ of genus $2k+1$ and $f_* \mathcal{O}_F=\mathcal{O}_D \oplus \mathscr{L}^{-1}$; the curve $F$ comes with a free $G$-action, corresponding to the automorphism exchanging the two sheets of the cover. Furthermore, since $f_* \omega_F = \omega_D \oplus (\omega_D \otimes \mathscr{L}),$ we deduce
\begin{equation}
W^+=f^*H^0(D, \, \omega_D), \quad W^-=f^*H^0(D, \, \omega_D \otimes \mathscr{L}).
\end{equation}
Therefore the desired result follows if both $\omega_D$ and $\omega_D \otimes \mathscr{L}$ are globally generated. In fact, for a base-point free line bundle, a general section avoids any given finite set of points, so we can choose recursively a basis where each section avoids the base loci of the previous ones; moreover, this property is preserved by \'{e}tale pullbacks. It is well known that $\omega_D$ is base-point free, see for instance \cite[Lemma 5.1 p. 341]{Ha77}. Regarding $\omega_D \otimes \mathscr{L}$, a point $p$ is in its base locus if and only if
\begin{equation}
H^0(D, \, \omega_D \otimes \mathscr{L} (-p))=H^0(D, \, \omega_D \otimes \mathscr{L}),
\end{equation}
namely, if and only if $h^1(D,\, \omega_D \otimes \mathscr{L} (-p))=1$. By Serre duality, this is equivalent to the fact that there exists $q \in D$ such that the divisor class of $\mathscr{L}$ is of the form $q-p$. But then $\mathcal{O}_D(2q-2p)=\mathcal{O}_D$, hence the linear system spanned by $2p$ and $2q$ is a $g^1_2$ on $D$ and so $D$ is hyperelliptic. Summing up, $\omega_D \otimes \mathscr{L}$ is globally generated if and only if one of these conditions (both implying $g(D) \geq 3$, and so $g=g(F) \geq 5$) hold:
\begin{itemize}
\item $D$ is non-hyperelliptic;
\item $D$ is hyperelliptic and the divisor class of $\mathscr{L}$ is not the difference of two Weierstrass points.
\end{itemize}
This concludes the proof.
\end{proof}

Taking $C$ and $F$ as above, for all $k \geq 2$ we can now define a product-quotient surface $X=(C \times F)/G$, where $G$ acts diagonally on the product. Such action is free (because the action of $G$ on $F$ is so), hence $X$ is a smooth, minimal surface of general type, whose invariants are 
\begin{equation}
p_g(X)=3k+1, \quad q(X)=k+2,  \quad K_{X}^2=16 k.
\end{equation}
Thus $h^0(X, \, \Omega_X)=k+2$ and $c_1(X)^2-c_2(X)=8k >0$. The natural projections of $C\times F$ induce two isotrivial fibrations $X \to F/G$ and $X \to C/G$, whose general fibres are isomorphic to $C$ and $F$, respectively. In particular, the cotangent bundle $\Omega_X$ is not ample, see \cite[Corollaire 3.8]{Rou09}. 

Let us now show that $\Omega_X$ is not globally generated either, but that it is nevertheless strongly semi-ample.

\begin{proposition} \label{prop: S^2 for product-quotient}
Let $C$ and $F$ be curves with a $G$-action as above, and $X=(C \times F)/G$. Then $\Omega_{X}$ is not globally generated, whereas its second symmetric power $S^2 \Omega_{X}$ is globally generated $($and so $S^n \Omega_{X}$ is globally generated for all even $n)$.
\end{proposition}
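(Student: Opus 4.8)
The plan is to analyze $\Omega_X$ and $S^2\Omega_X$ on the product $C \times F$ first, then descend to the quotient $X = (C \times F)/G$. Writing $p \colon C \times F \to C$ and $q \colon C \times F \to F$ for the two projections, the cotangent bundle of the product splits as $\Omega_{C \times F} = p^* \omega_C \oplus q^* \omega_F$, and this decomposition is $G$-equivariant since $G$ acts diagonally. Because the quotient map $\varphi \colon C \times F \to X$ is \'{e}tale (the $G$-action is free), we have $\Omega_{C \times F} = \varphi^* \Omega_X$, and consequently global generation of $S^n \Omega_X$ at a point is equivalent to global generation of $S^n \Omega_{C \times F}$ at the preimage points, using only the $G$-invariant sections. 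So throughout I would identify $H^0(X, \, S^n \Omega_X)$ with the invariant subspace $H^0(C \times F, \, S^n \Omega_{C \times F})^G$ and check generation fibrewise upstairs.

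\textbf{Failure of global generation for $\Omega_X$.} I would show that the invariant $1$-forms cannot generate $\Omega_{C \times F}$ at a fixed point of $g$ on the $C$-factor. Using $\Omega_{C \times F} = p^* \omega_C \oplus q^* \omega_F$, the $G$-invariant global $1$-forms are spanned by $p^*\xi_1$ together with $q^* W^+$, where $\xi_1$ spans $V^+ \subset H^0(C, \, \omega_C)$ (Lemma \ref{lem:G-action on omega_C}) and $W^+ \subset H^0(F, \, \omega_F)$ is the invariant part (Lemma \ref{lem:G-action on omega_F}). At a point $(c_0, \, f_0)$ lying over one of the four $G$-fixed points $c_0$ of $C$, the single invariant form $\xi_1 = \omega_2 = x\,dx/y$ from the $C$-factor vanishes (its divisor is supported at the fixed points $x = 0, \infty$), so the $C$-component of the evaluation map is not surjective. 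Hence the invariant sections fail to generate the $p^*\omega_C$ summand there, proving $\Omega_X$ is not globally generated.

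\textbf{Global generation for $S^2\Omega_X$.} Now the point is that passing to the symmetric square produces invariant generators in every direction. Decomposing $S^2 \Omega_{C \times F} = p^* \omega_C^{\otimes 2} \oplus (p^*\omega_C \otimes q^*\omega_F) \oplus q^* \omega_F^{\otimes 2}$, I would exhibit invariant sections generating each summand fibrewise. For $p^*\omega_C^{\otimes 2}$ the relevant invariant products are $\xi_i \xi_j$ with $i, j$ of equal parity: the antiinvariant forms $\xi_2, \xi_3$ (spanning $V^-$, i.e. $\omega_1, \omega_3$) multiply to invariant elements of $S^2$, and by the disjoint-support property in Lemma \ref{lem:G-action on omega_C} they have no common zero, so $S^2\omega_C$ is generated by invariant sections at the $C$-fixed points as well. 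Symmetrically, $W^-$ gives invariant generators of $q^*\omega_F^{\otimes 2}$, while for the mixed summand $p^*\omega_C \otimes q^*\omega_F$ the invariant generators are products $(\text{invariant from }C) \cdot (\text{invariant from }F)$ plus $(\text{antiinvariant from }C) \cdot (\text{antiinvariant from }F)$, which again cover all directions by the disjoint-support choices. Combining these, the invariant subspace $H^0(C \times F, \, S^2\Omega_{C \times F})^G = H^0(X, \, S^2\Omega_X)$ generates $S^2\Omega_{C \times F} = \varphi^* S^2\Omega_X$ at every point, hence $S^2\Omega_X$ is globally generated. That $S^n\Omega_X$ is then globally generated for all even $n$ follows since $S^2\Omega_X$ globally generated implies $S^{2t}\Omega_X = S^t(S^2\Omega_X)$ receives a surjection from $S^t$ of a globally generated bundle, and the strong semi-ampleness conclusion is immediate.

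The main obstacle I anticipate is the bookkeeping at the \emph{fixed points of the $C$-factor}: everywhere else global generation is easy because $C \times F$ is locally a product of curves with base-point-free canonical forms, but over the four fixed points the single invariant $1$-form on $C$ degenerates, and one must verify carefully that the degree-two invariant products (coming from pairs of antiinvariant forms) genuinely fill in the missing $p^*\omega_C^{\otimes 2}$ direction. This is exactly where the disjoint-support conditions built into Lemmas \ref{lem:G-action on omega_C} and \ref{lem:G-action on omega_F} are used, and the careful step is to match parities so that each summand of $S^2\Omega_{C \times F}$ is hit by genuinely $G$-invariant sections rather than merely by sections that are individually non-invariant.
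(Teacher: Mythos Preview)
Your approach is essentially the same as the paper's: pull back to $C\times F$, identify $H^0(X,\,S^n\Omega_X)$ with the $G$-invariant sections upstairs, decompose $S^2\Omega_{C\times F}$ into the three line-bundle summands, and exhibit invariant sections with no common zeros in each summand using the disjoint-support lemmas. The paper uses slightly different explicit generators (e.g.\ $(\xi_1)^2,(\xi_2)^2$ for the $L^2$ summand and the three tensors $\xi_1\boxtimes\tau_1,\ \xi_2\boxtimes\tau_{k+2},\ \xi_3\boxtimes\tau_{k+3}$ for $L\otimes M$), and for the failure of global generation it argues abstractly that $\dim V^+=1$ forces zeros rather than locating them at the fixed points, but the logic is identical.

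One correction: in your final sentence you write $S^{2t}\Omega_X = S^t(S^2\Omega_X)$. This equality is false (already for a rank~$2$ bundle the ranks are $2t+1$ versus $\binom{t+2}{2}$). What is true, and what you presumably intend given the phrase ``receives a surjection,'' is that there is a natural epimorphism $S^t(S^2\Omega_X)\twoheadrightarrow S^{2t}\Omega_X$; since symmetric powers of globally generated bundles are globally generated and quotients inherit global generation, the conclusion follows. Replace the equality with this surjection and the argument is complete.
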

\begin{proof}
Denoting by $\pi_C \colon C \times F \to C$, $\pi_F \colon C \times F \to F$ the two natural projections, we have 
\begin{equation}
\Omega_{C \times F} = L \oplus M,
\end{equation}
 where $L=\pi_C^* \omega_C$, $M=\pi_F^* \omega_F$. Moreover, the covering map $C \times F \to X$ being \'etale, for all $n \geq 1$
we have $H^0(X, \, S^n \Omega_X)=H^0(C \times F, \, S^n\Omega_{C \times F})^G$. Thus, since the action of $G$ on $C \times F$ does not exchange the two factors, in order to show that $S^n \Omega_X$ is globally generated we must show that it is possible to generate every summand $L^k \otimes M^{n-k}$ of $S^n\Omega_{C \times F}$ by using $G$-invariant global sections. 

In the case $n=1$, the space of invariant global sections of $\Omega_{C \times F}$ is 
\begin{equation}
V^+ \oplus W^+ = \langle \xi_1 \rangle \oplus \langle \tau_1, \ldots, \tau_{k+1} \rangle.
\end{equation}
This shows that $\Omega_X$ is not globally generated, since $V^+$ is $1$-dimensional and so it is not possible to generate the summand $L$ by means of $G$-invariant global sections.

In the case $n=2$, we have 
\begin{equation}
S^2 \Omega_{C \times F} = L^2 \oplus (L \otimes M) \oplus M^2.
\end{equation} 
We recall that $\operatorname{div}(\xi_i)$ are pairwise disjoint divisors, and the same is true for $\operatorname{div}(\tau_j)$, see Lemmas \ref{lem:G-action on omega_C} and \ref{lem:G-action on omega_F}; thus, using the notation $\alpha \boxtimes \beta$ for $\pi_C^* \alpha  \otimes \pi_F^* \beta$, we can say that
\begin{itemize}
\item the $G$-invariant sections $(\xi_1)^2, \, \,  (\xi_2)^2$ generate $L^2$;
\item the $G$-invariant sections $\xi_1 \boxtimes \tau_1, \, \, \xi_2 \boxtimes \tau_{k+2}, \, \,  \xi_3 \boxtimes \tau_{k+3}$ generate $L \otimes M$;
\item the $G$-invariant sections $(\tau_1)^2, \, \, (\tau_2)^2$ generate $M^2$.
\end{itemize}
This shows that $S^2 \Omega_{X}$ is globally generated.
\end{proof}
 
\begin{proposition} \label{prop:generically-finite-for-all-n}
Let $k \geq 2$ and $X$ be the surface constructed above. Then the pluri-cotangent map $\psi_n \colon \mathbb{P}(\Omega_{X}) \to H^0(X, \, S^n\Omega_{X})$ is generically finite onto its image for all even $n$. 
\end{proposition}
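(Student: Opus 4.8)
The plan is to show that for all even $n$ the hypothesis of Theorem~\ref{thm:image-n-cotangent} is satisfied, namely that $h^0(X, \, S^n \Omega_X) > \frac{1}{2}(n+1)(n+2)$, and then invoke that theorem directly. Since Proposition~\ref{prop: S^2 for product-quotient} already guarantees that $S^n \Omega_X$ is globally generated for all even $n$, the only thing that remains is the dimension count. The strategy mirrors the one used in the proof of Proposition~\ref{prop:cotangent_maps_example}: exploit the \'etale cover $C \times F \to X$ to identify $H^0(X, \, S^n \Omega_X)$ with a $G$-invariant subspace of $H^0(C \times F, \, S^n \Omega_{C \times F})$ and then bound its dimension from below.

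First I would use that the covering map $C \times F \to X$ is \'etale, so that $H^0(X, \, S^n \Omega_X) = H^0(C \times F, \, S^n \Omega_{C \times F})^G$, exactly as recorded in the proof of Proposition~\ref{prop: S^2 for product-quotient}. Then, using the decomposition $\Omega_{C \times F} = L \oplus M$ with $L = \pi_C^* \omega_C$ and $M = \pi_F^* \omega_F$, I would write
\begin{equation}
S^n \Omega_{C \times F} = \bigoplus_{j=0}^{n} L^{j} \otimes M^{n-j},
\end{equation}
and by the K\"unneth formula compute the global sections of each summand as $H^0(C, \, \omega_C^{j}) \otimes H^0(F, \, \omega_F^{n-j})$. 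The next step is to take $G$-invariants: since $G$ acts diagonally and does not swap the factors, the invariant part of each summand is $\left(H^0(C, \, \omega_C^{j}) \otimes H^0(F, \, \omega_F^{n-j})\right)^G$, which splits according to the characters of $G = \mathbb{Z}_2$ on each tensor factor. For even $n$, a section is $G$-invariant precisely when the two tensor factors carry the same character, so the invariant dimension is the sum over the two characters of the products of the corresponding character-eigenspace dimensions.

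To finish I only need a single clean lower bound rather than the exact dimension. The simplest route is to isolate one summand whose invariant part is already large enough. For instance, since $F$ has genus $g = 2k+1 \geq 5$, the space $H^0(F, \, \omega_F)$ is $g$-dimensional and splits as $W^+ \oplus W^-$ with both pieces nontrivial (Lemma~\ref{lem:G-action on omega_F}); more generally $h^0(F, \, \omega_F^{n-j})$ grows like $(2g-2)(n-j)$ for large powers, so the invariant part of $H^0(C, \omega_C^{0}) \otimes H^0(F, \omega_F^{n})$ alone, being the full invariant part of the $G$-module $H^0(F, \, \omega_F^{n})$, already has dimension of order $g n$, which for even $n \geq 2$ comfortably exceeds the quadratic quantity $\frac{1}{2}(n+1)(n+2)$ once $k$ (hence $g$) is at least $2$. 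I expect the main obstacle to be bookkeeping rather than conceptual: one must verify that the $G$-invariant subspace is genuinely large for \emph{every} even $n \geq 2$, including the small case $n = 2$, and not merely asymptotically. The cleanest way to handle the small cases uniformly is probably to produce an explicit family of independent invariant sections (as was done by hand for $n=2$ in Proposition~\ref{prop: S^2 for product-quotient}) and count them, so that the bound $h^0(X, \, S^n \Omega_X) > \frac{1}{2}(n+1)(n+2)$ holds on the nose; once that inequality is in hand, the conclusion is immediate from Theorem~\ref{thm:image-n-cotangent}.
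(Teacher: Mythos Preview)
Your overall strategy---reducing to Theorem~\ref{thm:image-n-cotangent} via the inequality $h^0(X, S^n\Omega_X) > \frac{1}{2}(n+1)(n+2)$---is sound, and the identification of $H^0(X, S^n\Omega_X)$ with the $G$-invariants on $C \times F$ together with the K\"unneth decomposition is correct. However, the specific lower bound you propose has a genuine gap: the $j=0$ summand $H^0(F, \omega_F^n)^G$ has dimension of order $gn$, which is \emph{linear} in $n$, and therefore cannot dominate the quadratic quantity $\tfrac{1}{2}(n+1)(n+2)$ for large $n$ at any fixed $k$. Concretely, since $F \to D$ is \'etale one has $H^0(F,\omega_F^n)^+ \cong H^0(D,\omega_D^n)$, so for $k=2$ and $n=4$ this contributes $h^0(D,\omega_D^4) = 7k = 14 < 15$. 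A single summand thus never suffices. To salvage your route you would have to sum over all $j$; the total is then of cubic order in $n$ and does beat the threshold asymptotically, but verifying the inequality uniformly for all even $n \geq 2$ and all $k \geq 2$ requires splitting each $H^0(C, \omega_C^j)$ into $\pm$-eigenspaces, and since the $G$-action on $C$ has fixed points this calls for an equivariant Riemann--Roch computation rather than the straightforward argument available for $F$.

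The paper sidesteps this bookkeeping. It treats $n=2$ via the explicit invariant sections produced in the proof of Proposition~\ref{prop: S^2 for product-quotient}, obtaining $h^0(X, S^2\Omega_X) \geq 7 > 6$, and then for all even $n \geq 4$ it invokes the Chern-number criterion of Corollary~\ref{cor:generically-finite-1} instead: with $c_1^2 = 16k$ and $c_2 = 8k$ the threshold there is an explicit function of $k$, shown to be strictly less than $4$ for every $k \geq 2$. This two-step split is shorter and avoids the eigenspace analysis entirely; your approach, if carried out with the full sum, would instead give explicit lower bounds on $h^0(X,S^n\Omega_X)$, which is extra information but at a real bookkeeping cost.
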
 
\begin{proof}
We first analyze the case $n=2$. Using the $G$-invariant global sections of $S^2 \Omega_X$ produced of  at the end of the proof of Proposition \ref{prop: S^2 for product-quotient}, we get
\begin{equation}
\begin{split}
h^0(X, \, S^2 \Omega_X)& = \dim \left[ H^0(X,\, L^2)^G \oplus H^0(X, \, L \otimes M)^G \oplus H^0(X, \, M^2)^G \right] \\
& \geq 2+3+2=7.
\end{split}
\end{equation} 
We obtained $h^0(X, \, S^2 \Omega_X) >6$  for all  $k \geq 2$, hence $\psi_2$ is generically finite onto its image by Theorem \ref{thm:image-n-cotangent}. 

Assume now $n \geq 4$. Since  $c_1(X)^2-c_2(X)=16k-8k>0$,  the result follows from Proposition  \ref{prop:c1^2-c2>0}.
\end{proof}

\subsection{Counterexamples to the generic finiteness of $\psi_n$: smooth ample divisors in abelian threefolds} \label{subsec:lambda-6-sharp}

We will now show that the assumption $h^0(X, \, S^n \Omega_X) > \frac{1}{2}(n+1)(n+2)$ in Theorem \ref{thm:image-n-cotangent} cannot be dropped. In fact, we will provide examples of surfaces $X$ of general type, with $\Omega_X$ globally generated, such that $h^0(X, \, S^n\Omega_X) =\frac{1}{2}(n+1)(n+2)$ and $X_n$ is the $n{\mathrm{th}}$ Veronese surface for all $n \geq 1$. Thus, no pluri-cotangent map of $X$ is generically finite onto its image. All these surfaces satisfy $c_1^2-c_2=0$.
\medskip

Let $(A, \, M)$ be a polarized abelian threefold, with polarization $M$ of type $(d_1, \, d_2, \, d_3)$, such that there exists a smooth element $X \in |M|$. By the Riemann-Roch Theorem and the ampleness of $M$, we have
\begin{equation} \label{eq:3-fold-polarization}
h^0(A, \, M)=\chi(A, \, M)=\frac{1}{6}M^3= d_1d_2d_3,
\end{equation}
see \cite[Chapter 3]{BL04}. Moreover, by adjunction we get $\omega_X=\mathcal{O}_X(X)$ and so 
\begin{equation} \label{eq:K2-abelian}
K_X^2=M^3=6h^0(A, \, M).
\end{equation}
From the short exact sequence
\begin{equation} \label{eq:short-abelian}
0 \to \mathcal{O}_A \to \mathcal{O}_A(X) \to \omega_X \to 0
\end{equation}
we infer 
\begin{equation}
p_g(X)=h^0(A, \, M)+2, \, \quad q(X)=3,
\end{equation}
hence $\chi(\mathcal{O}_X)=h^0(A, \, M)$. Summing up, $X$ is a minimal surface of general type with $K_X^2=6 \chi(\mathcal{O}_X)$, namely $c_1(X)^2-c_2(X)=0$; moreover the canonical bundle $\Omega_X$ is globally generated  (cf. Example \ref{ex:Omega_generated}) and so $K_X$ is ample (Lemma \ref{lem:ampleness of K_X}).

\begin{proposition} \label{prop:symmetric-power}
For all $n \geq 1$, we have 
\begin{equation} \label{eq:symmetric-power}
H^0(X, \, S^n \Omega_X)=S^n H^0(X, \, \Omega_X) \simeq 	\mathbb{C}^{\frac{(n+1)(n+2)}{2}}.
\end{equation}
Furthermore, the image $X_n$ of $\psi_n \colon \PP(\Omega_X) \to \PP(H^0(X, \, S^n \Omega_X))$ is projectively equivalent to the $n{\emph{th}}$ Veronese surface $\nu_n(\PP^2) \subset \PP^{\frac{n(n+3)}{2}}$.
\end{proposition}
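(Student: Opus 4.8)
I would prove the two assertions separately, beginning with the identity $H^0(X,\,S^n\Omega_X)=S^nH^0(X,\,\Omega_X)$. Since $A$ is abelian, $\Omega_A\cong\mO_A^{\oplus 3}$ is trivial; restricting to $X$ and recalling that adjunction gives $\omega_X=\mO_X(X)$, hence $N^*_{X/A}=\omega_X^{-1}$, I obtain the conormal exact sequence of vector bundles
\begin{equation}
0 \to \omega_X^{-1} \to \mO_X^{\oplus 3} \to \Omega_X \to 0.
\end{equation}
Because $\omega_X^{-1}$ is a line sub-bundle, taking its $n$th symmetric power yields the short exact sequence
\begin{equation}
0 \to \omega_X^{-1}\otimes S^{n-1}\mO_X^{\oplus 3} \to S^n\mO_X^{\oplus 3} \to S^n\Omega_X \to 0,
\end{equation}
whose first term is a direct sum of copies of $\omega_X^{-1}$.

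Since $K_X$ is ample I have $H^0(X,\,\omega_X^{-1})=0$, and Serre duality combined with Kodaira vanishing for $\omega_X^2=\omega_X\otimes\omega_X$ gives $H^1(X,\,\omega_X^{-1})=0$; hence passing to cohomology produces an isomorphism $H^0(X,\,S^n\mO_X^{\oplus 3})\xrightarrow{\sim}H^0(X,\,S^n\Omega_X)$. Trivializing $\mO_X^{\oplus 3}=H^0(X,\,\Omega_X)\otimes\mO_X$ identifies the source with $S^nH^0(X,\,\Omega_X)$ and the isomorphism itself with the symmetrization map $\sigma_n$ of diagram \eqref{dia:symmetrization}. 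This proves \eqref{eq:symmetric-power}, the dimension being $\binom{n+2}{2}=\tfrac12(n+1)(n+2)$.

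For the Veronese description I would exploit that $\sigma_n$ is now an isomorphism, so that the rational map $\PP(\sigma_n)$ in \eqref{dia:symmetrization} is a projective-linear isomorphism. The commutativity $\PP(\sigma_n)\circ\psi_n=\nu_n\circ\psi_1$ then shows that $X_n$ is projectively equivalent to $\nu_n(X_1)$, where $X_1=\psi_1(\PP(\Omega_X))\subseteq\PP(H^0(X,\,\Omega_X))=\PP^2$, so everything reduces to checking $X_1=\PP^2$. For this I would invoke Proposition \ref{prop:gauss-finite}: the first Gauss map $\mathsf{g}_1\colon X\to\mathbb{G}(1,\,\PP^2)$ is finite, hence its image is an irreducible surface inside the irreducible surface $\mathbb{G}(1,\,\PP^2)$ and therefore equals it. Since for $n=1$ each $C_x$ is the line $L_x$, the image $X_1=\bigcup_{x\in X}C_x$ is the union of all lines of $\PP^2$, namely $\PP^2$ itself. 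Consequently $X_n$ is projectively equivalent to $\nu_n(\PP^2)$, the Veronese surface in $\PP(S^nH^0(X,\,\Omega_X))=\PP^{n(n+3)/2}$.

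The step I expect to require the most care is the identification, in the second paragraph, of the cohomology isomorphism with the symmetrization map $\sigma_n$, since it is precisely this compatibility that transports the computation into diagram \eqref{dia:symmetrization} and lets me conclude; the vanishing statements and the Gauss-map input are then routine. A purely numerical alternative for $X_1=\PP^2$ is also available: the growth $h^0(\mO_{\PP(\Omega_X)}(n))=\binom{n+2}{2}\sim\tfrac12 n^2$ forces the Iitaka dimension of the globally generated line bundle $\mO_{\PP(\Omega_X)}(1)$ to equal $2$, so $\dim X_n=2$ for $n\gg 0$, and since $\dim X_n=\dim X_1$ for every $n$ this again yields $X_1=\PP^2$.
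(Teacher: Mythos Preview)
Your proof is correct and follows essentially the same route as the paper: both arguments use the conormal (equivalently, evaluation) sequence $0 \to \omega_X^{-1} \to H^0(X,\Omega_X)\otimes\mO_X \to \Omega_X \to 0$, its symmetric powers, the vanishing of $H^0$ and $H^1$ of $\omega_X^{-1}$, and then diagram \eqref{dia:symmetrization} to reduce the Veronese statement to the surjectivity of $\psi_1$ onto $\PP^2$. The only minor difference is that the paper asserts this last surjectivity directly (implicitly via Proposition \ref{prop:dim-Dp}), whereas you deduce it from the finiteness of the first Gauss map (Proposition \ref{prop:gauss-finite}); both are valid, and your alternative Iitaka-dimension argument works as well.
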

\begin{proof}
Since $\Omega_X$ is globally generated and $h^0(X, \, \Omega_X)=3$, we
have a short exact sequence
\begin{equation} \label{eq:seq-ex1-1}
0 \to \mO_X(-K_X) \to H^0(X, \, \Omega_X) \otimes \mO_X \to \Omega_X \to 0.
\end{equation}
From \cite[p. 577]{Eis94} it follows that \eqref{eq:seq-ex1-1} gives rise to a short exact sequence
\begin{equation} \label{eq:seq-ex1-2}
0 \to S^{n-1} H^0(X, \, \Omega_X) \otimes \mO_X(-K_X) \to S^n H^0(X, \, \Omega_X) \otimes \mO_X \to S^n \Omega_X \to 0,
\end{equation}
where the exactness on the left follows by comparing ranks. The canonical divisor $K_X$ is effective and ample, so $h^0(X, \, -K_X)=h^1(X, \, -K_X)=0$; taking cohomology in \eqref{eq:seq-ex1-2}, we obtain \eqref{eq:symmetric-power}. As a consequence, the map $\mathbb{P}(\sigma_n)$ in diagram \eqref{dia:symmetrization} is an isomorphism for all $n$. Since the $1$-cotangent map $\psi_1 \colon \mathbb{P}(\Omega_X) \to \mathbb{P}(H^0(X, \, \Omega_X)) \simeq \mathbb{P}^2$ is surjective, the image of $\psi_n$ must coincide, up to a projective transformation, with the image of $\nu_n$.
\end{proof}

\begin{remark} \label{rmk:Debarre-restriction}
Another way to state \eqref{eq:symmetric-power} is saying that the restriction map
\begin{equation} \label{eq:restriction-isomorphism}
H^0(A, \, S^n \Omega_A) \to H^0(X, \, S^n \Omega_X)
\end{equation}
is an isomorphism for all $n \geq 1$, cf. \cite[Proposition 13]{Deb05}. 
 
 \end{remark}

\begin{remark} \label{rmk:symmetric-product-genus-3}
When $M$ is a principal polarization, namely $(d_1, \, d_2, \, d_3)=(1, \, 1, \, 1)$, the surface $X$ is a smooth theta divisor of $A$ and we get $p_g(X)=q(X)=3$, $K_X^2=6$. In this case, $X$ is isomorphic to the second symmetric product $\operatorname{Sym}^2C$, where $C$ is a smooth, non-hyperelliptic curve of genus $3$, see \cite[p. 304]{CaCiML98}. 
\end{remark}

The next result shows that, if one assumes that the Albanese image is smooth, then the previous counterexamples are the only ones up to finite \'{e}tale covers. Recall that an abelian cover is a Galois cover with abelian Galois group.

\begin{proposition} \label{prop:all-pluricotangent-dim-2}
If $Y$ is a smooth surface of general type with smooth Albanese image, then the following are equivalent.
\begin{itemize}
\item[$\boldsymbol{(1)}$] $\Omega_Y$ is globally generated and the pluri-cotangent image $Y_n$ has dimension $2$ for all $n\geq 1$.
\item[$\boldsymbol{(2)}$] $\Omega_Y$ is globally generated and the $1$-cotangent image $Y_1$ has dimension $2$.
\item[$\boldsymbol{(3)}$] $Y$ is a finite, \'{e}tale cover of a smooth, ample divisor in an abelian threefold.
\item[$\boldsymbol{(4)}$] $Y$ is a finite, \'{e}tale abelian cover of a smooth, ample divisor in an abelian threefold.
\end{itemize}
\end{proposition}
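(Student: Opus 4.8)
The plan is to prove the cycle $(1)\Rightarrow(2)\Rightarrow(3)\Rightarrow(4)\Rightarrow(1)$. The implication $(1)\Rightarrow(2)$ is immediate, being the case $n=1$ of $(1)$, and $(4)\Rightarrow(1)$ will follow from the formally weaker $(3)\Rightarrow(1)$, since an abelian cover is in particular a finite étale cover. Thus the real content lies in $(2)\Rightarrow(3)$, in the upgrade $(3)\Rightarrow(4)$, and in $(3)\Rightarrow(1)$.

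For $(2)\Rightarrow(3)$ I would first pin down the irregularity. Since $\Omega_Y$ is globally generated, the Albanese map $a_Y\colon Y\to A:=\operatorname{Alb}(Y)$ is a local immersion (Example \ref{ex:Omega_generated}), so its image is a surface and $q(Y)=\dim A\geq 2$; the value $q=2$ is excluded, as it would make $a_Y$ a finite étale cover of an abelian surface, contradicting $\kappa(Y)=2$. On the other hand, if $q(Y)>3$ then by the result quoted in Example \ref{ex:Omega_generated} the map $\psi_1$ would be generically finite, giving $\dim Y_1=3$ and contradicting $(2)$. Hence $q(Y)=3$ and $A$ is an abelian threefold. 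Now I invoke the hypothesis that $Z:=a_Y(Y)$ is smooth: being a local immersion onto a smooth surface of the same dimension, $a_Y$ has differential everywhere injective with image the (two-dimensional) tangent space of $Z$, hence an isomorphism onto it, so $a_Y\colon Y\to Z$ is a local isomorphism and therefore (being proper and quasi-finite) a finite étale cover. Thus $Z$ is a smooth prime divisor in $A$, and it remains to prove that $Z$ is ample.

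The ampleness is, I expect, the main obstacle. I would argue as follows. Set $L=\mO_A(Z)$, an effective line bundle, and let $B:=K(L)^0$ be the connected component of the kernel of $\phi_L\colon A\to\widehat{A}$. If $\dim B>0$ then $Z$ is invariant under translation by $B$, hence a pullback $Z=\rho^{-1}(Z')$ along $\rho\colon A\to A/B$; consequently $Z$ fibers over $Z'=Z/B$, of dimension $2-\dim B<2$, with abelian fibers isomorphic to $B$, so $\kappa(Z)\leq\dim Z'<2$. But $Z$ is of general type, being étale-covered by the general type surface $Y$ --- a contradiction. Therefore $K(L)$ is finite; since $L$ is moreover effective it has index $0$, so it is ample (see \cite[Chapter 3]{BL04}). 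This gives $(3)$.

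For $(3)\Rightarrow(4)$ I would apply the Lefschetz hyperplane theorem for fundamental groups (see \cite[Section 3.1]{Laz04}): as $Z$ is a smooth ample divisor in the smooth projective threefold $A$, the inclusion induces an isomorphism $\pi_1(Z)\xrightarrow{\ \sim\ }\pi_1(A)\cong\ZZ^6$. In particular $\pi_1(Z)$ is abelian, so every connected finite étale cover of $Z$ is Galois with abelian Galois group, and the cover $Y\to Z$ constructed above is thus an abelian cover, which is $(4)$. Finally, for $(3)\Rightarrow(1)$ (hence $(4)\Rightarrow(1)$), the same isomorphism shows that $\pi_1(Y)$ is a finite-index subgroup of $\ZZ^6$, whence $H_1(Y,\ZZ)\cong\ZZ^6$ and $q(Y)=3$; moreover $\Omega_Y=a_Y^*\Omega_Z$ is globally generated, being a pullback. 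Since $h^0(Y,\Omega_Y)=3$ and $K_Y$ is ample (Lemma \ref{lem:ampleness of K_X}), the short exact sequence argument of Proposition \ref{prop:symmetric-power}, together with the vanishings $h^0(Y,-K_Y)=h^1(Y,-K_Y)=0$ (Kodaira vanishing and Serre duality), yields $H^0(Y,S^n\Omega_Y)=S^nH^0(Y,\Omega_Y)$ of dimension $\tfrac12(n+1)(n+2)$ and $Y_n\cong\nu_n(\PP^2)$ for all $n$; in particular $\dim Y_n=2$, which is $(1)$.
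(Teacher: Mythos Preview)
Your proof is correct and follows the same cycle of implications as the paper, but two of the steps are handled differently, and in one case more economically.

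For the ampleness of $Z$ in $(2)\Rightarrow(3)$, the paper argues via adjunction: $Z$ contains no rational curves (it sits in an abelian variety) so it is minimal of general type, hence $0<K_Z^2=Z^3$, and then invokes the criterion that an effective divisor on an abelian variety with positive top self-intersection is ample. Your route through $K(L)^0$ is equally valid, but the assertion that $Z$ is invariant under translation by $B=K(L)^0$ deserves one line of justification: the translation action of $K(L)^0$ on the linear system $|L|$ is given by a homomorphism from the complete variety $K(L)^0$ to the affine group $\operatorname{PGL}(H^0(A,L))$, hence is trivial, so every $D\in|L|$ is $K(L)^0$-stable. With that said, your argument goes through.

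The more interesting divergence is in closing the cycle. The paper proves $(4)\Rightarrow(1)$ by exploiting the abelian cover structure: writing $f_*\mathcal{O}_Y=\mathcal{O}_X\oplus L_1\oplus\cdots\oplus L_s$ with $L_j\in\operatorname{Pic}^0(X)$ non-trivial torsion, it tensors the short exact sequence \eqref{eq:seq-ex1-2} by each $L_j$ and uses $H^0(X,L_j)=H^1(X,-K_X+L_j)=0$ to deduce $H^0(X,S^n\Omega_X\otimes L_j)=0$, whence $H^0(Y,S^n\Omega_Y)=f^*H^0(X,S^n\Omega_X)$ and $Y_n=X_n$. You instead prove the stronger $(3)\Rightarrow(1)$ directly: from $\pi_1(Y)\hookrightarrow\pi_1(Z)\cong\ZZ^6$ of finite index you get $q(Y)=3$, and then run the argument of Proposition~\ref{prop:symmetric-power} verbatim on $Y$ itself. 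This is cleaner and avoids the Pardini decomposition altogether. The paper's approach, on the other hand, yields the explicit identification $H^0(Y,S^n\Omega_Y)=f^*H^0(X,S^n\Omega_X)$, which is what is needed for the subsequent Remark~\ref{rmk:order-Gauss-maps} on Gauss maps of large degree.
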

\begin{proof}
$\boldsymbol{(1)} \Longrightarrow \boldsymbol{(2)}$ Obvious. \\ \\
$\boldsymbol{(2)} \Longrightarrow \boldsymbol{(3)}$ If $\boldsymbol{(2)}$ holds, then by \cite[Proposition 2.14]{Rou09} we have $q(Y)=3$, hence $A:=\operatorname{Alb}(Y)$ is an abelian threefold and the Albanese map $a_Y \colon Y \to A$ is a local immersion onto its smooth image $X \subset A$. This implies that $Y \to X$ is a local bihomomorphism between compact complex manifolds, hence an unramified analytic cover (\cite[Problem 11-9 p. 303]{Lee11}), which is actually an algebraic cover by GAGA. By adjunction, the surface $X$ satisfies $0 < K_X^2=X^3$, so the divisor $X$ is ample in $A$ by \cite[Proposition 4.5.2]{BL04}.  \\ \\
$\boldsymbol{(3)} \Longrightarrow \boldsymbol{(4)}$ Let $X \subset A$ be a smooth, ample divisor in an abelian threefold and let $f \colon Y \to X$ be a finite, \'{e}tale cover. By Lefschetz hyperplane theorem \cite[Theorem 3.1.21]{Laz04} it follows $\pi_1(X) = \pi_1(A)=\mathbb{Z}^6$; thus, since the fundamental group of $X$ is abelian, the cover  $f \colon Y \to X$ is Galois, with abelian Galois group. \\ \\
$\boldsymbol{(4)} \Longrightarrow \boldsymbol{(1)}$ Let $X \subset A$ be a smooth ample divisor in an abelian threefold and let $f \colon Y \to X$ be a finite, \'{e}tale abelian cover. By \cite[p. 200]{Par91}, there exist non-trivial torsion divisors $L_1, \ldots, L_s \in \operatorname{Pic}^0(X)$ such that 
\begin{equation} \label{eq:Pardini-splitting}
f_* \mathcal{O}_Y=\mathcal{O}_X \oplus L_1 \oplus \cdots \oplus L_s.
\end{equation}
Since the cover is \'{e}tale, we have $S^n \Omega_{Y} = f^* (S^n \Omega_X)$ for all $n \geq 1$. Thus $S^n \Omega_Y$ is globally generated (because $S^n \Omega_X$ is) and, by projection formula, we get
\begin{equation} \label{eq:Pardini-for-cotangent}
f_* S^n \Omega_{Y} = S^n \Omega_X \oplus (S^n \Omega_X \otimes L_1) \oplus \cdots \oplus (S^n \Omega_X \otimes L_s).
\end{equation}
Since the divisor $L_j$ is not effective and $K_X-L_j$ is ample, we get
\begin{equation}
H^0(X, \, L_j)=H^1(X, -K_X+L_j)=0.
\end{equation}
Thus, tensoring \eqref{eq:seq-ex1-2} with $L_j$ and passing to cohomology, we deduce $H^0(X, \, S^n \Omega_X \otimes L_j)=0$ for all $j \in \{1, \ldots, s\}$. Hence \eqref{eq:Pardini-for-cotangent} yields $H^0(Y, \, S^n \Omega_{Y})= f^*H^0(X, \, S^n \Omega_{X})$, which in turn implies $Y_n=X_n$. By Proposition \ref{prop:symmetric-power} it follows that $Y_n$ has dimension $2$ for all $n \geq 1$.
\end{proof}

\begin{remark} \label{rmk:order-Gauss-maps}
The argument in the last part of the proof of Proposition \ref{prop:all-pluricotangent-dim-2} can be also used in order to construct examples of surfaces of general type having Gauss maps of arbitrarily large degree. For the sake of simplicity, let us just consider finite, \'{e}tale cyclic covers. Let $X \subset A$ be a smooth ample divisor in an abelian threefold and let $L$ be a non-trivial element of order $k$ in $\mathrm{Pic}^0(X)$. These data define an \'{e}tale $\mathbb{Z}_k$-cover $f \colon Y \to X$, where 
\begin{equation} \label{eq:k-cover}
f_* \mathcal{O}_{Y} = \mathcal{O}_X \oplus L \oplus \cdots \oplus L^{k-1}.
\end{equation}  
As before, we get $H^0(Y, \, S^n \Omega_{Y})= f^*H^0(X, \, S^n \Omega_{X})$, which in turn implies
\begin{equation*}
 \mathbb{G}(n, \, \mathbb{P}(H^0(Y, \, S^n \Omega_{Y})))= \mathbb{G}(n, \, \mathbb{P}(H^0(X, \, S^n \Omega_{X}))). 
\end{equation*}
This shows that, for all $n \geq 1$, the $n$th Gauss map of $Y$ factors through the degree $k$  cover $f \colon Y \to X$, and so its degree is a multiple of $k$. Since $k$ is arbitrary, this construction provides smooth, minimal surfaces of general type $Y$, all of whose Gauss maps have arbitrarily large degree.  
\end{remark}

\section{Open problems} \label{sec:open_problems}
We end the paper with a couple of open problems.

\begin{Op1}
Are there examples of minimal surfaces of general type such that $S^n \Omega_X$ is globally generated for some $n$ and $h^0(X, \, S^n \Omega_X) < \frac{1}{2}(n+1)(n+2)$ holds? If such examples exist, what is the behaviour of the pluri-cotangent map $\psi_n$?
\end{Op1}
This question what asked by the first Author in the MathOverflow thread \href{https://mathoverflow.net/questions/430570/}{MO430570}, without any answer so far. It is motivated by the fact that, in all the examples that we are able to compute, if $S^n \Omega_X$ is globally generated then  $h^0(X, \, S^n \Omega_X) \geq \frac{1}{2}(n+1)(n+2)$. The equality is attained, for instance, by finite \'{e}tale covers of smooth ample divisors in abelian threefolds,  see Subsection \ref{subsec:lambda-6-sharp}.

\begin{Op2}
How should one modify Proposition $\operatorname{\ref{prop:all-pluricotangent-dim-2}}$ if one removes the smoothness assumption for the Albanese image of $Y$? 
\end{Op2}


\bigskip
\bigskip

Francesco Polizzi \\
 Dipartimento di Matematica e Applicazioni \\
Universit\`{a} degli Studi di Napoli ``Federico II" \\
Via Cintia, Monte S. Angelo \\
I-80126 Napoli, Italy. \\
\emph{E-mail address:} \verb|francesco.polizzi@unina.it|

\bigskip

Xavier Roulleau\\
\vspace{0.1cm}
Laboratoire angevin de recherche en math\'ematiques \\
LAREMA, UMR 6093 du CNRS \\ UNIV. Angers, SFR MathStic, 2 Bd Lavoisier \\
49045 Angers Cedex 01, France.
\\
\emph{E-mail address:} \verb|Xavier.Roulleau@univ-angers.fr|

\end{document}